\documentclass[imanum,namedate,webpdf]{ima-authoring-template}
\usepackage{amssymb}
\usepackage{amsmath}
\usepackage{stmaryrd}
\usepackage{physics}
\usepackage{braket}
\usepackage{booktabs}
\usepackage{cleveref}
\usepackage{algorithm}
\usepackage{algpseudocode}
\usepackage{arydshln}
\usepackage{multirow}
\usepackage{mathrsfs}
\graphicspath{{Fig/}}
\newtheorem{theorem}{Theorem}[section]
\newtheorem{corollary}[theorem]{Corollary}
\newtheorem{assumption}{Assumption}[section]
\newtheorem{lemma}[theorem]{Lemma}
\newtheorem{proposition}[theorem]{Proposition}
\newtheorem{definition}[theorem]{Definition}
\newtheorem{example}[theorem]{Example}

\newtheorem{problem}{Problem}[section]
\newtheorem{scheme}{Scheme}[section]
\newtheorem{remark}{Remark}[section]
\crefname{assumption}{assumption}{hypotheses}
\Crefname{assumption}{Assumption}{Hypotheses}
\crefname{proposition}{proposition}{propositions}
\Crefname{proposition}{Proposition}{Propositions}
\crefname{remark}{remark}{remarks}
\Crefname{remark}{Remark}{Remarks}
\numberwithin{equation}{section}
\newcommand{\Rcal}{\mathcal R}
\newcommand{\Hmu}{\mathcal H}
\newcommand{\Mmu}{\mathcal M}
\newcommand{\PIop}{\mathscr P}
\newcommand{\Playop}{\mathscr F}
\newcommand{\Gapop}{\mathscr B}
\newcommand{\Aggop}{\mathscr G}
\begin{document}
\journaltitle{}
\copyrightyear{}
\pubyear{}
\appnotes{Paper}
\copyrightstatement{}
\firstpage{1}
\title[Numerical analysis with play-type PI hysteresis]{Numerical analysis of parabolic equations with Prandtl--Ishlinskii hysteresis of play type}
\author{Shu Xu\address{\orgdiv{School of Mathematical Sciences}, \orgname{Peking University}, \orgaddress{\postcode{100871}, \state{Beijing}, \country{China}}}}
\author{Liqun Cao\address{\orgdiv{LSEC, NCMIS, Institute of Computational Mathematics and Scientific/Engineering Computing}, \orgname{Academy of Mathematics and Systems Science, Chinese Academy of Sciences}, \orgaddress{\postcode{100190}, \state{Beijing}, \country{China}}}}
\authormark{S. Xu and L. Cao}
\corresp[*]{Corresponding author: \href{mailto:mathxushu@pku.edu.cn}{mathxushu@pku.edu.cn}}
\abstract{Rigorous error analysis for numerical approximations of parabolic equations with hysteresis remains limited, even for the widely used Prandtl--Ishlinskii hysteresis of play type. In this work, we establish an \(O(h+\tau)\) error bound for an implicit Euler \(P_1\) finite element discretization. The analysis requires neither higher-order temporal regularity of the hysteresis variables, which cannot in general be expected in hysteretic evolutions, nor additional spatial regularity of these variables. For the temporal discretization, we exploit a convex subgradient-flow structure in a weighted Hilbert space together with the associated dissipation and coercive subgradient remainder to obtain first-order convergence. For the spatial discretization, only the diffusive field is restricted to the finite element space, and a constraint-preserving comparison yields an \(O(h)\) semidiscrete estimate. The analysis is developed for play-type Prandtl--Ishlinskii operators formulated directly on a spatial Hilbert space, encompassing the canonical pointwise model as well as more general spatially structured constraints.}
\keywords{hysteresis; Prandtl--Ishlinskii operator; subgradient flows; error estimates; finite element methods}
\maketitle
\section{Introduction}
\label{sec:intro}

  In this work, we consider the numerical analysis of parabolic equations with hysteresis of the form
  \begin{equation}
    \partial_t\bigl(u+\PIop[u]\bigr)-\Delta u=f, \label{eq:pde_strong}
  \end{equation}
  where \(\PIop\) is a Prandtl--Ishlinskii hysteresis operator of play type.  It is defined as a weighted superposition of a family of play operators $\set{\mathcal{F}_r}_{r \in \Rcal}$,   
  \begin{equation}
    \PIop[u]=a u+\int_{\Rcal}c(r)\mathcal{F}_r\left[u\right]\,d\mu(r),
  \end{equation}
  with the constitutive data and initial memory suppressed from the notation. Here the play operators \[\mathcal F_r\colon W^{1,1}(0,T;H)\rightarrow W^{1,1}(0,T;H)\] are formulated directly on the spatial Hilbert space \(H\), thereby allowing spatial structure to be encoded at the level of the hysteresis operator itself, beyond pointwise extensions of scalar play laws.

  Prandtl--Ishlinskii models of play type have been widely used in precision actuation and hysteresis compensation \citep{wang_robust_2006,al_janaideh_analytical_2011,gu_modeling_2014,zhang_prandtlishlinskii_2023}. 
  Parabolic equations of the form \eqref{eq:pde_strong} arise in applications including electromagnetic loss calculations \citep{vankeerComputationalMethodsEvaluation1998,bermudezElectromagneticComputationsPreisach2017} and diffusion in porous media \citep{schweizer_hysteresis_2017,gavioli_degenerate_2023}. A broad related analytical theory has been developed, including results on well-posedness, regularity and homogenization  \citep{visintinDifferentialModelsHysteresis1994,francuHomogenizationHeat2003,eleuteri_wellposedness_2008,gavioli_degenerate_2025}. On the numerical side, implicit-Euler finite element approximations were already considered in the early work \citep{verdiNumericalApproximationHysteresis1985,verdiNumericalApproximationPreisach1989}, where solvability, stability, and convergence were established, but explicit temporal and spatial convergence rates were not derived. Quantitative error analysis under the natural regularity of hysteretic evolutions therefore remains substantially less developed.

  A principal difficulty in the error analysis stems from the history dependence of the PI operator. 
  Unlike the constitutive nonlinearities arising in classical nonlinear parabolic problems, the hysteresis output at time \(t\) depends not only on the current input but also on its past history. The PI operator therefore cannot be represented an instantaneous constitutive mapping depending only on the current input \citep{epperson_finite_1984,elliott_error_1987,verdi_numerical_1994}. Nor does its memory dependence possess the fixed convolution structure characteristic \citep{hornung_diffusion_1990,peszynska_finite_1996}. Consequently, the finite element arguments developed for these two classes of problems do not transfer directly to \eqref{eq:pde_strong}. 
  For play-type PI hysteresis, however, the current family of play states contains the information from the past that is relevant for its future evolution. This permits the temporal nonlocality to be internalized through an enlarged state, leading to classical \(m\)-accretive formulations \citep{visintinDifferentialModelsHysteresis1994} and convex-analytic formulations \citep{little_semilinear_1994}. We adopt this viewpoint and formulate \eqref{eq:pde_strong} as a local-in-time evolution in an enlarged Hilbert space. The resulting formulation removes the explicit history dependence at the state level, while the numerical analysis still has to contend with the limited temporal and spatial regularity of the hysteresis variables.

  The temporal regularity issue is already visible at the level of a single play operator (see Figure~\ref{fig:linear_play}). Because play states may change branch at turning points, their time derivatives need not remain continuous even for smooth inputs.  Higher-order temporal regularity of the hysteresis variables therefore cannot in general be expected, and the standard first-order backward-Euler analysis based on second-order temporal consistency estimates cannot be applied under the natural regularity available here.
  A robust alternative is provided by the general error theory for \(m\)-accretive evolutions. \cite{nochettoSavareNonlinearEvolution2006} established generic \(O(\tau^{1/2})\)-type Euler error bounds in this framework and included parabolic equations with hysteresis among their applications. At this level of generality, however, the theory does not explain the first-order temporal behavior observed numerically \citep{verdiNumericalApproximationHysteresis1985,bermudezElectromagneticComputationsPreisach2017}.

  This gap points to additional variational structure in the present hysteretic evolution. \cite{rulla_error_1996} established linear convergence for implicit approximations of subgradient evolutions, while \citet{nochetto_error_1998,nochetto_posteriori_2000} developed a posteriori error estimates for backward-Euler discretizations that exploit the underlying convex potential and discrete dissipation. 
  This suggests seeking a convex subgradient-flow structure for the augmented hysteretic evolution in a suitable Hilbert space. Such a structure would replace second-derivative consistency estimates by a dissipation-based argument and thereby provide a route to first-order temporal convergence without requiring higher-order temporal regularity of the hysteresis variables.

  The spatial discretization presents a different difficulty. The diffusive field benefits from elliptic regularity, whereas the hysteresis variables are naturally controlled only in the memory space and may not possess the spatial Sobolev regularity required for a standard componentwise finite element approximation of the augmented state. This regularity bottleneck is reflected in abstract finite element analyses of rate-independent evolutions \citep{bartels_quasi-optimal_2014}, while more problem-specific analyses of quasistatic elastoplasticity \citep{alberty_numerical_2000,valdman_mathematical_2002,carstensen_convergence_2019} exploit the coupled variational structure to avoid estimating the interpolation error of the internal variables independently.
  Although these systems are structurally different from the parabolic hysteresis problem considered here, they suggest that the spatial approximation should respect the coupled state structure rather than treat each component independently. The key question is then whether one can restrict only the diffusive field to the finite element space while preserving the hysteretic constraint in the comparison argument.

  The main contributions of this work are fourfold. We first formulate the play dynamics directly in the spatial Hilbert space, which includes the canonical pointwise PI model while allowing more general spatially structured play constraints. We then recast the parabolic hysteresis problem as a convex subgradient flow in a weighted Hilbert space, providing the framework for the subsequent error analysis.
  For the time discretization, we prove first-order convergence of the implicit Euler scheme under bounded-variation forcing and an initial-domain condition, without requiring higher-order temporal regularity of the hysteresis variables. For the spatial discretization, we restrict only the diffusive field to the finite element space and construct a constraint-preserving comparison state, leading to an \(O(h)\) semidiscrete error estimate without an independent spatial approximation of the hysteresis variables. The temporal and spatial estimates together yield the fully discrete bound \(O(h+\tau)\) under exact spatial integration and constitutive evaluation.

  The remainder of the paper is organized as follows. Section~\ref{sec:continuous_pi_model} introduces the Hilbert-space formulation of the play-type Prandtl--Ishlinskii operator and states the corresponding parabolic hysteresis problem. Section~\ref{sec:continuous_pi_structure} develops the weighted Hilbert-space subgradient formulation and establishes the well-posedness and regularity properties used in the subsequent analysis. Section~\ref{sec:continuous_pi_time} studies the implicit Euler discretization and derives both a posteriori and a priori temporal error estimates. Section~\ref{sec:continuous_pi_space} addresses the spatial finite element approximation, including the restricted subgradient formulation, semidiscrete error estimates, and the fully discrete space--time error bound. Section~\ref{sec:conclusion} concludes the paper and discusses directions for further work.

  Throughout, $A\lesssim B$ means $A\leq CB$, where the constant is independent of the spatial mesh size, the time partition, and its step ratios. Additional dependencies are specified locally.

\section{Hilbert-space formulation of spatially structured Prandtl--Ishlinskii hysteresis}
  \label{sec:continuous_pi_model}

  In this section, we formulate spatially structured Prandtl--Ishlinskii hysteresis of play type through a parameterized family of Hilbert-space play operators. We then introduce the corresponding memory space and weak parabolic problem, which will be used in Section~\ref{sec:continuous_pi_structure} to derive the subgradient-flow formulation.

\subsection{Vector-valued play operators and Prandtl--Ishlinskii superposition}
Fix a final time $T>0$. Let $H$ be a real separable Hilbert space endowed with a scalar product $(\cdot,\cdot)_H$. 
For a closed convex set $C\subset H$ with $0\in C$, we denote its outward normal cone at a point $x\in C$ by
\[
 N_C(x):=\{\xi\in H:(\xi,y-x)_H\leq0\ \forall y\in C\}.
\]
Let $q_0 \in C$ be a given element. Then for every function  $u\in W^{1,p}(0,T;H)$ for some $p \in \left[1,+\infty\right]$, \cite[Theorem~I.1.9 and Proposition~I.3.9]{krejčí1996hysteresis} shows there exists a unique $\xi \in W^{1,p}(0,T;H)$ satisfying the initial constraint  $u(0) - \xi(0) = q_0$ and
\begin{equation}
  u(t)-\xi(t) \in C,\quad  \partial_t \xi(t) \in N_C\left(u(t)-\xi(t)\right), \quad \text{a.e. } t \in (0,T).
\end{equation} 
It hence yields the so-called vector play operator with characteristic $C$, i.e.,
\begin{equation}
  \mathcal{F}_C\colon   W^{1,p}(0,T;H) \times C \rightarrow W^{1,p}(0,T;H),\quad (u,q_0) \mapsto \xi.
\end{equation}

A single play element provides an elementary elastoplastic constitutive model, whereas it does not provide a satisfactory description.
In the spirit of \cite[Example~1.16]{krejčí1996hysteresis}, we proceed to define Prandtl--Ishlinskii models of play type which superpose a continuum of elastoplastic elements with different characteristics.
\begin{figure}[h]
  \centering
  \begin{minipage}[b]{0.4\textwidth}
    \centering
    \includegraphics[width=\textwidth]{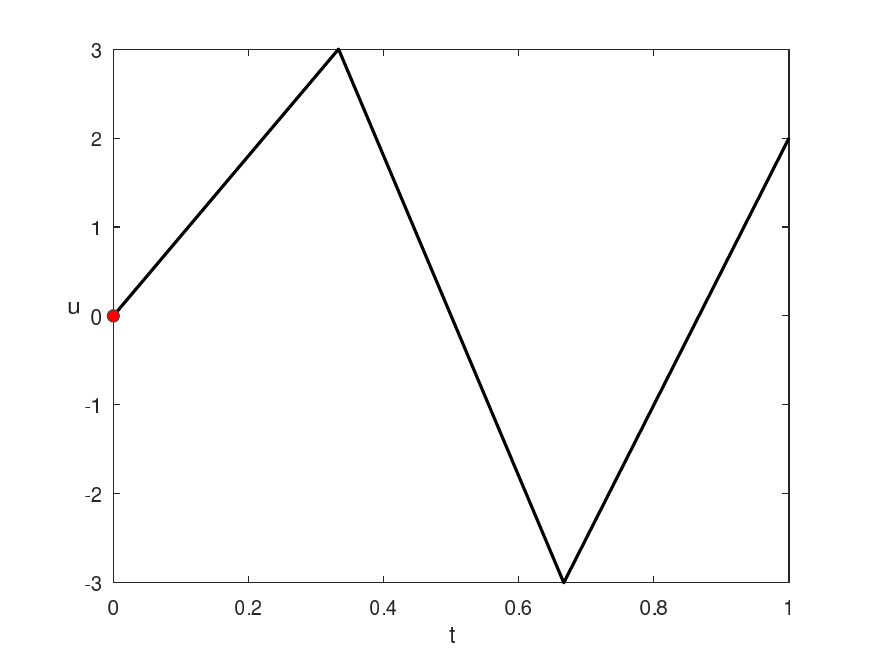}
    \par\smallskip (a) Input curve.
  \end{minipage}
  \begin{minipage}[b]{0.4\textwidth}
    \centering
    \includegraphics[width=\textwidth]{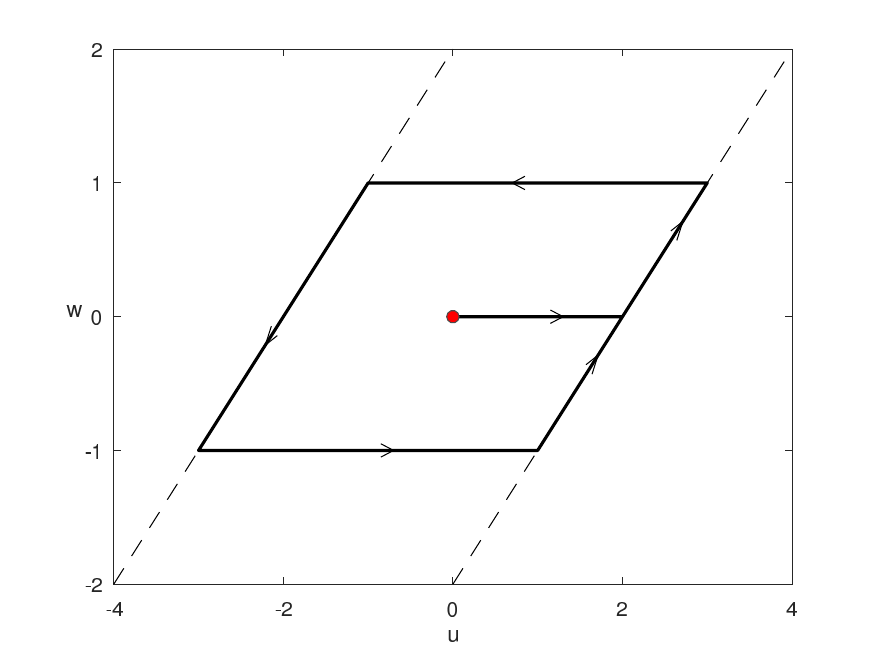}
    \par\smallskip (b) Input--output curve.
  \end{minipage}
  \caption{Diagram of the play operator.}
  \label{fig:linear_play}
  \par\smallskip\footnotesize Alt text: Two panels illustrate a play operator. Panel (a) shows the input rising from zero to three, falling to minus three, and rising to two. Panel (b) shows the play output against the input: arrows trace a hysteresis loop with horizontal segments at output minus one and plus one, joined by sloping segments.
\end{figure}

\begin{definition}[Prandtl--Ishlinskii model of play type]
\label{def:continuous_pi}
Let $(\Rcal,\mathcal A,\mu)$ be a measure space and \(K\colon \Rcal \rightrightarrows H, \, r\mapsto K_r\) a set-valued map, where $K_r\subset H$ is closed and convex with $0\in K_r$ for each $r \in \Rcal$. Suppose that $c\colon \Rcal\to (0,+\infty)$ is measurable and $a\geq0$.
We call
\[
 \mathfrak D:=\bigl(\Rcal,\mathcal A,\mu,a,c,K\bigr)
\]
the constitutive data. 

Given an input $u\in W^{1,1}(0,T;H)$ and a strongly measurable family of initial memories
\[
 w^0:\Rcal\to H,\qquad r\mapsto w^0(r),
\]
satisfying $ q^0(r):= u(0) - w^0(r) \in K_r$ for $\mu$-almost every $r$,
define $$w(r):= \mathcal{F}_{K_r}\left[u;q^0(r)\right] \in W^{1,1}(0,T;H).$$
Whenever $r\mapsto c(r)w(r)$ is Bochner integrable, the Prandtl--Ishlinskii operator of play type associated with $\mathfrak D$ is defined as
\begin{equation}
 \PIop_{\mathfrak D}[u;w^0]
 :=a u + \int_{\Rcal} c(r)w(r)\,d\mu(r).
 \label{eq:continuous_pi_definition}
\end{equation}
When the constitutive data and initial memories are fixed, we abbreviate this output by $\PIop[u]$.
\end{definition}

   Definition~\ref{def:continuous_pi} naturally suggests the way to incorporate the spatial variable into the Hilbert-space-valued internal variables  and the benefits are twofold. First, it includes the standard pointwise spatial realization considered in \cite[Section~IX.1]{visintinDifferentialModelsHysteresis1994} with
  \begin{equation}
    \Rcal\subset [0,+\infty), \quad H = L^2(\Omega),\quad
    K_r = K_r^{\mathrm{pt}}:=\{z\in H:|z(x)|\leq r\text{ for almost every }x\in\Omega\},
    \label{eq:standard_play_constraint_family}
  \end{equation}
  where  $\Omega\subset\mathbb R^d$ is a bounded Lipschitz domain.
  This is due to the fact that the Hilbert normal-cone inclusion holds if and only if the scalar play inclusion holds at almost every spatial point. On the other hand, it reveals the possibility to consider the interaction between different spatial points, for example, the spatially nonlocal constraint shown in Example~\ref{ex:global_l2_ball}.

  


  \subsection{Practical assumptions and the parabolic hysteresis problem}
    The hysteresis relation introduced above serves as a material constitutive law, and the parabolic hysteresis problem arises by coupling \eqref{eq:continuous_pi_definition} with the diffusion balance. We impose below assumptions satisfied by standard PI models used in engineering applications. Representative examples are given in Appendix~\ref{app:examples}.
  \begin{assumption}
    \label{ass:continuous_model}
    The constitutive data $\mathfrak D$ in Definition~\ref{def:continuous_pi} satisfy the following conditions.  The measure space $(\Rcal,\mathcal A,\mu)$ is complete and finite with $\mu\geq0$, and the coefficient $c$ satisfies
    \begin{equation}
    S_c:=\int_{\Rcal}c(r)\,d\mu(r)<\infty.
    \label{eq:continuous_coefficient_moment}
    \end{equation}
    The graph of the set-valued map \(K\colon \Rcal \rightrightarrows H, \, r\mapsto K_r\) is measurable, i.e.,
    \begin{equation}
    \operatorname{Gr}K
    :=\{(r,z)\in\Rcal\times H:z\in K_r\}
    \in\mathcal A\otimes\mathcal B(H).
    \label{eq:continuous_graph_measurability}
    \end{equation}
    Here $\mathcal B(H)$ denotes the Borel $\sigma$-algebra of $H$.
  \end{assumption}

  The coefficient $c$ in Assumption~\ref{ass:continuous_model} induces the finite
  weighted measure
  \begin{equation}
  \nu_c(A):=\int_Ac(r)\,d\mu(r),
  \qquad A\in\mathcal A,
  \label{eq:continuous_memory_measure}
  \end{equation}
  for which $\nu_c(\Rcal)=S_c$. Since $0<c(r)<\infty$, the measures $\nu_c$
  and $\mu$ have the same null sets. We use the weighted Bochner space
  \begin{equation}
  \Mmu:=L^2(\Rcal,\nu_c;H)
  =L^2\bigl(\Rcal,c(r)d\mu(r);H\bigr),
  \label{eq:continuous_memory_space}
  \end{equation}
  whose elements are strongly measurable maps $z:\Rcal\to H$, identified up
  to $\mu$-almost-everywhere equality, such that
  $\int_{\Rcal}c(r)\|z(r)\|^2\,d\mu(r)<\infty$. Its inner product and norm are
  \begin{equation}
  \langle z_1,z_2\rangle_{\Mmu}
  :=\int_{\Rcal}c(r)(z_1(r),z_2(r))_H\,d\mu(r),
  \qquad
  \|z\|_{\Mmu}:=\langle z,z\rangle_{\Mmu}^{1/2}.
  \label{eq:continuous_memory_product}
  \end{equation}
  The standard completeness of Bochner $L^2$ spaces makes $\Mmu$ a Hilbert
  space \cite[Section~1.2]{hytonen_analysis_2016}.

  From now on, $\Omega\subset\mathbb R^d$ denotes a bounded Lipschitz domain, and we specialize $H$ to $L^2(\Omega)$ and $V:=H_0^1(\Omega)$. We abbreviate the norm on $H$ by $\|\cdot\|$ and equip $V$ with the norm $\|v\|_V:=\|\nabla v\|$. The parabolic hysteresis problem is proposed as follows:
  \begin{problem}[Parabolic hysteresis problem]
    \label{prob:parabolic_hysteresis_problem}
    Let $f\in L^2(0,T;H)$ and the initial value
    $(u^0,w^0)\in V\times\Mmu$ satisfy
    $q^0(r):=u^0-w^0(r)\in K_r$ for $\mu$-almost every $r\in\Rcal$.
    The weak formulation of \eqref{eq:pde_strong} is to seek
    $u\in H^1(0,T;H)\cap L^2(0,T;V)$ and $w\in H^1(0,T;\Mmu)$
    such that for almost every $t\in(0,T)$,
    \begin{align}
      &\kappa(\partial_tu,\varphi)_H
    +\int_{\Rcal}c(r)(\partial_tw(r),\varphi)_H\,d\mu(r)
    +(\nabla u,\nabla\varphi)_H
    =(f,\varphi)_H,\quad
    \forall\varphi\in V,
    \label{eq:continuous_pi_balance}\\
    &q(r):=u-w(r)\in K_r,
    \qquad
    \partial_tw(r)\in N_{K_r}(q(r))
    \quad\text{for $\mu$-a.e. }r \in \Rcal,
    \label{eq:continuous_pi_play} \\
    & u(0) = u^0,\qquad
    w(0) = w^0,
    \label{eq:continuous_physical_initial_boundary_data}
    \end{align}
    where $\kappa:=1+a$.
  \end{problem}
\section{Subgradient-flow structure}
  \label{sec:continuous_pi_structure}
  
  As observed in \cite[Chap.~VIII]{verdi_numerical_1994}, the system \eqref{eq:continuous_pi_balance}--\eqref{eq:continuous_pi_play} can be rewritten as
  \begin{align}
    &\kappa(\partial_tu,\varphi)_H
    +\int_{\Rcal}c(r)(\eta(r),\varphi)_H\,d\mu(r)
    +(\nabla u,\nabla\varphi)_H
    =(f,\varphi)_H,\quad
    \forall\varphi\in V, \label{eq:continuous_pi_balance_eta}\\
    & (\partial_tw(r),\psi)_H -  (\eta(r),\psi)_H = 0,\ \forall\psi \in H,  \quad
    \eta(r)\in N_{K_r}(q(r))
    \quad\text{for $\mu$-a.e. }r \in \Rcal. \label{eq:continuous_pi_play_eta}
  \end{align}
  This state-augmented formulation suggests recasting the coupled system as a convex subgradient flow in a suitable Hilbert space. More precisely, we seek a state \(U=(u,w)\) satisfying
  \begin{equation}
    U_t+\partial\Psi(U)\ni F
    \quad\text{in }\Hmu\quad\text{for almost every }t,
    \qquad U(0)=U^0.
    \label{eq:continuous_gradient_flow_formal}
  \end{equation}
  The purpose of this section is therefore to identify the weighted state space \(\Hmu\) and the convex energy functional \(\Psi\), characterize the subdifferential \(\partial\Psi\), and establish the equivalence between \eqref{eq:continuous_gradient_flow_formal} and the original hysteresis system. The resulting formulation also yields the well-posedness and natural regularity properties needed in the subsequent error analysis.

  \subsection{State space, constraint and energy}
    Set \(\Hmu:=H\times\Mmu\) and endow this physical state space with
    \begin{equation}
    \langle (u,w),(v,z)\rangle_{\Hmu}
    :=\kappa(u,v)_H+\langle w,z\rangle_{\Mmu}.
    \label{eq:continuous_state_product}
    \end{equation}
    The product of the two complete component spaces is complete, so
    \eqref{eq:continuous_state_product} makes $\Hmu$ a Hilbert space.
    For $$U=(u,w) \in \Hmu,$$ define its parameterized gap by $\Gapop:\Hmu\to\Mmu$,
    \begin{equation}
    [\Gapop U](r):=u-w(r).
    \label{eq:continuous_gap_map}
    \end{equation}
    We also define the gap-constraint set
    \begin{equation}
    \mathcal K:=\{q\in\Mmu:q(r)\in K_r\text{ for $\mu$-almost every }r\}.
    \label{eq:continuous_gap_constraint}
    \end{equation}

  \begin{proposition}
  \label{prop:continuous_gap_map}
  The gap operator $\Gapop:\Hmu\to\Mmu$ is linear, bounded and surjective. More precisely,
  \begin{equation}
  \|\Gapop(u,w)\|_{\Mmu}^2
  \leq 2S_c\|u\|^2+2\|w\|_{\Mmu}^2
  \leq2\max\{S_c/\kappa,1\}\|(u,w)\|_{\Hmu}^2.
  \label{eq:continuous_gap_bound}
  \end{equation}
  \end{proposition}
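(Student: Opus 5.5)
We split the proof into three parts: well-definedness of $\Gapop$ as a map into $\Mmu$, the norm bound, and surjectivity; linearity is then evident from the pointwise definition \eqref{eq:continuous_gap_map}.

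\emph{Well-definedness.} For $(u,w)\in\Hmu$ the map $r\mapsto u-w(r)$ is strongly measurable. Indeed, the constant map $r\mapsto u$ is trivially strongly measurable, being the limit of constant simple functions, and $w$ is strongly measurable by the definition of $\Mmu$. Differences of strongly measurable maps are strongly measurable. Changing $w$ on a $\mu$-null set changes $\Gapop(u,w)$ only on a null set, so $\Gapop$ is well defined on equivalence classes.

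\emph{The bound.} Using $\|u-w(r)\|^2\le 2\|u\|^2+2\|w(r)\|^2$ pointwise and integrating against $c(r)\,d\mu(r)$, we obtain
\[
\|\Gapop(u,w)\|_{\Mmu}^2\le 2\|u\|^2\int_{\Rcal}c\,d\mu+2\|w\|_{\Mmu}^2=2S_c\|u\|^2+2\|w\|_{\Mmu}^2 .
\]
The right-hand side is finite by \eqref{eq:continuous_coefficient_moment}, so $\Gapop(u,w)\in\Mmu$. For the second inequality, write
\[
2S_c\|u\|^2=2(S_c/\kappa)\,\kappa\|u\|^2\le 2\max\{S_c/\kappa,1\}\,\kappa\|u\|^2,
\]
and similarly $2\|w\|_{\Mmu}^2\le 2\max\{S_c/\kappa,1\}\|w\|_{\Mmu}^2$. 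Summing and using \eqref{eq:continuous_state_product}, this gives \eqref{eq:continuous_gap_bound}. Boundedness follows.

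\emph{Surjectivity.} Given $q\in\Mmu$, take $U=(0,-q)\in\Hmu$; then $[\Gapop U](r)=0-(-q(r))=q(r)$.

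Each step is routine. The only point requiring care is the measurability and integrability of the constant component $u$ in $\Mmu$. This is exactly where the finiteness of $S_c$ from \eqref{eq:continuous_coefficient_moment} enters, since it guarantees that the constant map $r\mapsto u$ has finite $\Mmu$-norm.
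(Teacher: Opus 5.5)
Your proof is correct and follows essentially the same route as the paper: strong measurability inherited from $w$, the pointwise inequality $\|u-w(r)\|^2\le 2\|u\|^2+2\|w(r)\|^2$ integrated against $c\,d\mu$, and surjectivity via $q=\Gapop(0,-q)$. You also write out the elementary step to the weighted $\Hmu$-norm bound with $\max\{S_c/\kappa,1\}$ and the role of $S_c<\infty$, both of which the paper leaves implicit.
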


  \begin{proof}
  Strong measurability of $r\mapsto u-w(r)$ follows from that of $w$. Moreover,
  \begin{align*}
  \|\Gapop(u,w)\|_{\Mmu}^2
  &=\int_{\Rcal}c(r)
    \left\|u-w(r)\right\|^2\,d\mu(r)\\
  &\leq2\int_{\Rcal}c(r)\|u\|^2\,d\mu(r)
    +2\int_{\Rcal}c(r)\|w(r)\|^2\,d\mu(r),
  \end{align*}
  which is \eqref{eq:continuous_gap_bound}.
  In addition, we have \( q=\mathscr B(0,-q) \) for any \(q\in\mathcal M\).
  \end{proof}

  \begin{lemma}[Closed convex gap constraint]
  \label{lem:continuous_gap_constraint}
  The set $\mathcal K$ is a
  nonempty closed convex subset of $\Mmu$.
  \end{lemma}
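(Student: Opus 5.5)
Nonemptiness, convexity and closedness of $\mathcal K$ can be checked one at a time, using only that each $K_r$ is closed and convex with $0\in K_r$, that $\Mmu$ is a Bochner $L^2$ space, and that $\nu_c$ and $\mu$ have the same null sets.

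\emph{Nonemptiness.} The zero element $q\equiv0$ is strongly measurable and lies in $\Mmu$. Since $0\in K_r$ for every $r$, we get $0\in\mathcal K$. The graph measurability \eqref{eq:continuous_graph_measurability} is not needed for this step; it only serves to produce nontrivial measurable selections elsewhere.

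\emph{Convexity.} Let $q_1,q_2\in\mathcal K$ and $\lambda\in[0,1]$. There is a $\mu$-null set $N_1\cup N_2$ outside of which $q_1(r),q_2(r)\in K_r$. For every $r$ off this set, convexity of $K_r$ gives $\lambda q_1(r)+(1-\lambda)q_2(r)\in K_r$. Since $\Mmu$ is a linear space, the combination also lies in $\Mmu$, so it belongs to $\mathcal K$.

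\emph{Closedness.} This is the only step with real content: $L^2$ convergence does not control values pointwise, so we pass to a subsequence. Let $q_n\in\mathcal K$ with $q_n\to q$ in $\Mmu$, i.e.
\[
\int_{\Rcal}c(r)\|q_n(r)-q(r)\|^2\,d\mu(r)\to0 .
\]
By the Riesz--Fischer subsequence argument for Bochner $L^p$ spaces, applied to the scalar function $r\mapsto\|q_n(r)-q(r)\|$ in $L^2(\Rcal,\nu_c)$, there is a subsequence $q_{n_k}$ and a $\nu_c$-null set $N_0$ such that $q_{n_k}(r)\to q(r)$ in $H$ for all $r\notin N_0$. Because $c>0$, the set $N_0$ is also $\mu$-null. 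Let $N_k$ be the $\mu$-null exceptional set for $q_{n_k}$. The countable union $N:=N_0\cup\bigcup_kN_k$ is $\mu$-null. For $r\notin N$, the sequence $q_{n_k}(r)$ lies in $K_r$ and converges in $H$ to $q(r)$. Since $K_r$ is closed in $H$, we conclude $q(r)\in K_r$, hence $q\in\mathcal K$.

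An alternative route to closedness is weak closedness: a convex set is closed iff weakly closed, and one could test with the support-function description of each $K_r$. The subsequence argument above is more direct, so I would use it.
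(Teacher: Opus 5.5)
Your proof is correct and follows essentially the same route as the paper: the zero map gives nonemptiness, convexity is checked pointwise, and closedness comes from extracting a $\nu_c$-a.e. convergent subsequence, using that $\nu_c$ and $\mu$ have the same null sets, and applying closedness of each $K_r$. Your explicit treatment of the countable union of exceptional null sets is a small detail that the paper leaves implicit.
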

  \begin{proof}
  The zero map belongs to $\mathcal K$ because $0\in K_r$ for every $r$.
  Convexity follows pointwise from that of $K_r$. Let $q_n\to q$ strongly
  in $\Mmu$. A subsequence converges to $q$ in $H$ for $\nu_c$-almost every
  $r$. Since $\nu_c$ and $\mu$ have the same null sets by
  \eqref{eq:continuous_memory_measure}, closedness of $K_r$ gives
  $q(r)\in K_r$ for $\mu$-almost every $r$, and hence $q\in\mathcal K$.
  \end{proof}

  By means of the gap operator $\Gapop$, we can now define the hysteresis constraint set
  \begin{equation}
  \mathcal C
  :=\Gapop^{-1}(\mathcal K)
  =\{(u,w)\in\Hmu:\Gapop(u,w)\in\mathcal K\}.
  \label{eq:continuous_state_constraint}
  \end{equation}
  \par\noindent It is closed and convex by Proposition~\ref{prop:continuous_gap_map} and Lemma~\ref{lem:continuous_gap_constraint}.
  Introduce the extended Dirichlet functional $E$ on $H$,
  \[
  E(u):=
  \begin{cases}
    \dfrac12\|\nabla u\|^2,&u\in V,\\
    +\infty,&u\notin V,
  \end{cases}
  \]
  and let $\pi_1:\Hmu\to H$ denote the first-component projection,
  $\pi_1(u,w)=u$. We may set $\Phi:=E\circ\pi_1$ and define
  \begin{equation}
  \Psi:=\Phi+I_{\mathcal C},
  \label{eq:continuous_energy}
  \end{equation}
  where $I_{\mathcal C}$ is the indicator function of $\mathcal{C}$, i.e.,
  \[
    I_{\mathcal C}(u,w):=\begin{cases}0,&(u,w)\in {\mathcal C},\\+\infty,&(u,w)\notin {\mathcal C}.\end{cases}
  \]
  Thus the effective domain of the functional $\Psi:\Hmu\to(-\infty,+\infty]$ is
  \begin{equation}
  D(\Psi)
  =\{(u,w)\in\Hmu:u\in V,\ \Gapop(u,w)\in\mathcal K\}.
  \label{eq:continuous_energy_domain}
  \end{equation}

  \begin{proposition}[Convex energy]
  \label{prop:continuous_convex_energy}
  The functional $\Psi$ is proper, convex, and lower semicontinuous.
  \end{proposition}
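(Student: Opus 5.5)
We prove the three properties separately, treating $\Psi=\Phi+I_{\mathcal C}$ as a sum of two convex lower semicontinuous functionals on $\Hmu$ with a common point in their domains.

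\emph{Properness.} $\Psi$ never takes the value $-\infty$, because $E\geq0$ and $I_{\mathcal C}\geq0$. It is also not identically $+\infty$: the state $(0,0)$ lies in $\mathcal C$, since $\Gapop(0,0)=0\in\mathcal K$ by Lemma~\ref{lem:continuous_gap_constraint}, and $0\in V$. Hence $\Psi(0,0)=0$.

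\emph{Convexity.} The functional $E$ is convex on $H$. Its domain $V$ is a linear subspace, and on $V$ the map $v\mapsto\frac12\|\nabla v\|^2$ is a convex quadratic form; the extension by $+\infty$ preserves convexity. Since $\pi_1$ is linear, $\Phi=E\circ\pi_1$ is convex. The indicator $I_{\mathcal C}$ is convex because $\mathcal C=\Gapop^{-1}(\mathcal K)$ is the preimage of a convex set under a linear map (Proposition~\ref{prop:continuous_gap_map} and Lemma~\ref{lem:continuous_gap_constraint}). A sum of convex functionals with values in $(-\infty,+\infty]$ is convex.

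\emph{Lower semicontinuity.} This is the only step with real content. Since $\mathcal C$ is closed, being the preimage of the closed set $\mathcal K$ under the bounded map $\Gapop$, its indicator $I_{\mathcal C}$ is lower semicontinuous. For $\Phi$, it suffices to show that $E$ is lower semicontinuous on $H$, because $\pi_1$ is continuous with $\|u\|\leq\kappa^{-1/2}\|(u,w)\|_{\Hmu}$.

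\begin{itemize}
\item Let $u_n\to u$ in $H$. We may assume $\liminf E(u_n)=L<\infty$ and pass to a subsequence with $E(u_n)\to L$.
\item Then $(u_n)$ is bounded in $V$, so a further subsequence converges weakly in $V$ to some $\tilde u$.
\item The embedding $V\hookrightarrow H$ is continuous, so $u_n\rightharpoonup\tilde u$ weakly in $H$. Uniqueness of weak limits gives $\tilde u=u$, hence $u\in V$.
\item Weak lower semicontinuity of the norm $\|\nabla\cdot\|$ on $V$ yields $E(u)\leq L$.
\end{itemize}

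Equivalently, the sublevel sets of $E$ are closed balls of $V$, which are weakly compact in $V$ and therefore closed in $H$. Finally, the sum of two lower semicontinuous functionals with values in $(-\infty,+\infty]$ is lower semicontinuous, which completes the proof.
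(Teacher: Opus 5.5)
Your proposal is correct and follows essentially the same route as the paper. Both arguments obtain lower semicontinuity of $E$ by extracting a bounded-energy subsequence that converges weakly in $V$ and using weak lower semicontinuity of the norm. Both then rely on closedness and convexity of $\mathcal C$ for the indicator, nonnegativity of the two summands for the sum, and $(0,0)\in D(\Psi)$ for properness.
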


  \begin{proof}
  The extended Dirichlet energy $E$ is convex and lower semicontinuous on $H$: if $u_n\to u$ in $H$ and $\liminf_nE(u_n)<\infty$, a bounded-energy subsequence converges weakly in $V$ to $u$, and weak lower semicontinuity gives
  \[
  E(u)\leq\liminf_{n\to\infty}E(u_n).
  \]
  Consequently, $E\circ\pi_1$ is convex and lower semicontinuous on $\Hmu$. Since $\mathcal C$ is closed and convex, $I_{\mathcal C}$ has the same properties. Both functionals are nonnegative, so
  \[
  \Psi=E\circ\pi_1+I_{\mathcal C}
  \]
  is convex and lower semicontinuous on $\Hmu$.
  Finally, $(0,0)\in D(\Psi)$ because $0\in V$ and the zero gap belongs
  to $\mathcal K$. Therefore $\Psi$ is proper.
  \end{proof}

\subsection{Characterization of the subdifferential}
  We begin by the Dirichlet part $\Phi$, whose subdifferential is obtained by a simple extension in the product space.
  \begin{proposition}[Dirichlet part]
    \label{prop:continuous_Dirichlet_subdifferential}
    The domain of the subdifferential of $\Phi$ is
    \begin{equation}
      D(\partial\Phi)
      =\{(u,w)\in\Hmu:u\in V,\ \Delta u\in H\}.
    \label{eq:continuous_Dirichlet_subdifferential_domain}
    \end{equation}
    Moreover, for every $U=(u,w)\in D(\partial\Phi)$,
    \begin{equation}
  \partial \Phi(U)
    =\left\{\left(-\dfrac1\kappa\Delta u,0\right)\right\}.
    \label{eq:continuous_Dirichlet_subdifferential}
    \end{equation}
  \end{proposition}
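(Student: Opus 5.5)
We characterise the subdifferential of $\Phi=E\circ\pi_1$ in the weighted product space $\Hmu$ directly from the definition, reducing it to the classical subdifferential of the Dirichlet energy $E$ on $H$. The only non-standard point is the factor $1/\kappa$, which comes from the weighted inner product \eqref{eq:continuous_state_product}. The domain of $\Phi$ is $D(\Phi)=V\times\Mmu$, since $\Phi$ does not depend on $w$.

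\textbf{Step 1 (reduction).} Let $U=(u,w)$ and $G=(g,z)\in\Hmu$. By definition, $G\in\partial\Phi(U)$ if and only if $u\in V$ and
\[
E(v)-E(u)\ \ge\ \kappa(g,v-u)_H+\langle z,y-w\rangle_{\Mmu}\qquad\forall (v,y)\in\Hmu .
\]
First take $v=u$ and let $y-w$ range over all of $\Mmu$. The left side is then $0$, so $\langle z,y-w\rangle_{\Mmu}\le 0$ for all such $y$, which forces $z=0$. Conversely, if $z=0$ the inequality does not involve $y$ at all. Hence
\[
(g,z)\in\partial\Phi(u,w)\iff z=0 \text{ and } \kappa g\in\partial E(u),
\]
where $\partial E$ is the subdifferential of $E$ in $H$ with its standard inner product.

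\textbf{Step 2 (classical characterisation).} It remains to show that $D(\partial E)=\{u\in V:\Delta u\in H\}$ and $\partial E(u)=\{-\Delta u\}$. Here $\Delta u\in H$ means that the distributional Laplacian $\Delta u\in H^{-1}(\Omega)$ belongs to $L^2(\Omega)$. No elliptic regularity on the Lipschitz domain is needed, because we identify $\Delta u$ only as an element of $L^2$.

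For the inclusion $\supseteq$: let $u\in V$ with $\Delta u\in H$, and let $v\in V$. Convexity of $\tfrac12\|\nabla\cdot\|^2$ gives
\[
E(v)-E(u)\ \ge\ (\nabla u,\nabla(v-u))_H=(-\Delta u,v-u)_H ,
\]
where the equality follows by density of $C_c^\infty(\Omega)$ in $V$ and the definition of the distributional Laplacian. If $v\notin V$, then $E(v)=+\infty$ and the inequality is trivial. So $-\Delta u\in\partial E(u)$.

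For the inclusion $\subseteq$: let $\xi\in\partial E(u)$, so $u\in V$. Test with $v=u+s\varphi$ for $\varphi\in V$ and $s\in\mathbb R$. Dividing by $s>0$ and by $s<0$ and letting $s\to0$ gives the Gateaux derivative identity
\[
(\nabla u,\nabla\varphi)_H=(\xi,\varphi)_H\qquad\forall\varphi\in V .
\]
Restricting to $\varphi\in C_c^\infty(\Omega)$ shows $-\Delta u=\xi\in H$ in the sense of distributions. This gives both the domain inclusion and the single-valuedness of $\partial E(u)$.

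\textbf{Step 3 (conclusion).} Combining the two steps, $(g,z)\in\partial\Phi(u,w)$ if and only if $u\in V$, $\Delta u\in H$, $z=0$ and $g=-\tfrac1\kappa\Delta u$. This is exactly \eqref{eq:continuous_Dirichlet_subdifferential_domain}--\eqref{eq:continuous_Dirichlet_subdifferential}.

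The main point needing care is Step 2's inclusion $\subseteq$. The variational inequality must be turned into an equation by using two-sided variations in the linear space $V$, and "$\Delta u\in H$" must be read as a statement about the distributional Laplacian rather than as $u\in H^2(\Omega)$. Everything else is bookkeeping of the weight $\kappa$.
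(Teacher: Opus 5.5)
Your proof is correct, but it argues directly from the definition of the subdifferential instead of through the chain rule the paper uses. The paper invokes the linear-composition chain rule \cite[Corollary~16.53(i)]{bauschke_convex_2017}; the qualification condition $D(E)-\operatorname{ran}\pi_1=H$ holds because $\pi_1$ is surjective. This gives $\partial\Phi(U)=\pi_1^*\partial_HE(u)$. The paper then computes the adjoint $\pi_1^*h=(h/\kappa,0)$ in the weighted product and simply quotes the classical facts $D(\partial_HE)=\{u\in V:\Delta u\in H\}$ and $\partial_HE(u)=\{-\Delta u\}$.

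Your Step~1 is a hands-on version of that chain rule for this particular projection. Testing with $v=u$ forces every subgradient to be orthogonal to $\ker\pi_1=\{0\}\times\Mmu$, i.e.\ to lie in $\operatorname{ran}\pi_1^*=H\times\{0\}$. The remaining condition $\kappa g\in\partial E(u)$ is exactly $(g,0)=\pi_1^*(\kappa g)$. Because $\Phi$ ignores $w$, both inclusions are immediate and no qualification condition is needed.

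Your Step~2 also proves the classical characterisation of $\partial E$, which the paper takes for granted. You do this via two-sided G\^ateaux variations and density of $C_c^\infty(\Omega)$ in $V$. Your reading of $\Delta u\in H$ as a statement about the distributional Laplacian, with no $H^2$ regularity on a Lipschitz domain, is consistent with the paper, which invokes elliptic regularity only later on convex polygonal domains.

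The paper's route buys brevity and a uniform treatment with Proposition~\ref{prop:continuous_state_normal}, where the same chain rule handles $I_{\mathcal C}=I_{\mathcal K}\circ\Gapop$. Yours is elementary and self-contained, at the cost of relying on the decoupled structure of $\Phi$.
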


  \begin{proof}
    The projection $\pi_1:\Hmu\to H$ is bounded, linear, and surjective,
    and hence $D(E)-\operatorname{ran}\pi_1=H$.
    Thus the qualification condition of the linear-composition chain rule
    \cite[Corollary~16.53(i)]{bauschke_convex_2017} is satisfied, and
    \[
    \partial \Phi(U)
      =\partial(E\circ\pi_1)(U)=\pi_1^*\partial_HE(u).
    \]
    The weighted inner product \eqref{eq:continuous_state_product} yields
    $\pi_1^*h=(h/\kappa,0)$ and the Dirichlet energy satisfies
    \[
      D(\partial_HE)=\{u\in V:\Delta u\in H\},
      \qquad \partial_HE(u)=\{-\Delta u\}.
    \]
    Combining these facts proves both the formula and its domain.
  \end{proof}

  As to the state constraint part $I_{\mathcal C}$, we first define the aggregate operator $\Aggop\colon \Mmu \rightarrow H$ for each $z \in \Mmu$ by
  \begin{equation}
   \Aggop z:=\int_{\Rcal}c(r)z(r)\,d\mu(r),
   \label{eq:continuous_aggregate_operator}
  \end{equation}
  which is a bounded linear map from $\Mmu$ to $H$, because
  \begin{equation}
   \|\Aggop z\|
   \leq\int_{\Rcal}c(r)\|z(r)\|\,d\mu(r)
   \leq S_c^{1/2}\|z\|_{\Mmu}.
   \label{eq:continuous_memory_aggregate_bound}
  \end{equation}
  Then for $\eta\in\Mmu$, the adjoint of the gap map $\Gapop$ is given by
  \begin{equation}
  \Gapop^*\eta
  =\left(
    \frac1\kappa\Aggop\eta,
    -\eta\right)\in\Hmu.
  \label{eq:continuous_gap_adjoint}
  \end{equation}
  Indeed, for every $(v,z)\in\Hmu$,
  \begin{align*}
  \langle\eta,\Gapop(v,z)\rangle_{\Mmu}
  &=\int_{\Rcal}c(r)(\eta(r),v-z(r))_H\,d\mu(r)\\
  &=\kappa(\frac{1}{\kappa}\Aggop\eta,v)_H+\langle-\eta,z\rangle_{\Mmu}.
  \end{align*}

  \begin{proposition}[State constraint part]
    \label{prop:continuous_state_normal}
    Let $U=(u,w)\in\mathcal C$ and $q=\Gapop U$. Then
    \begin{equation}
   \partial I_{\mathcal C}(U)
    =\Gapop^*N_{\mathcal K}(q)
    =\left\{
    \left(\frac1\kappa\Aggop\eta,-\eta\right)\colon
      \eta\in N_{\mathcal K}(q)
    \right\}.
    \label{eq:continuous_state_normal}
    \end{equation}
  \end{proposition}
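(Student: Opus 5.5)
Since $\mathcal C=\Gapop^{-1}(\mathcal K)$, we have $I_{\mathcal C}=I_{\mathcal K}\circ\Gapop$. The plan is to apply the linear-composition chain rule \cite[Corollary~16.53(i)]{bauschke_convex_2017}, exactly as in the proof of Proposition~\ref{prop:continuous_Dirichlet_subdifferential}, but now with $\Gapop$ in place of $\pi_1$ and $I_{\mathcal K}$ in place of $E$. The second equality in \eqref{eq:continuous_state_normal} then follows by inserting the adjoint formula \eqref{eq:continuous_gap_adjoint}.

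First, $I_{\mathcal K}$ is proper, convex and lower semicontinuous on $\Mmu$ by Lemma~\ref{lem:continuous_gap_constraint}. Second, $\Gapop$ is bounded, linear and surjective by Proposition~\ref{prop:continuous_gap_map}, so
\[
D(I_{\mathcal K})-\operatorname{ran}\Gapop=\mathcal K-\Mmu=\Mmu .
\]
This gives the qualification condition $0\in\operatorname{sri}(D(I_{\mathcal K})-\operatorname{ran}\Gapop)$, and indeed $0$ lies in the interior of this set. The chain rule therefore yields
\[
\partial(I_{\mathcal K}\circ\Gapop)(U)=\Gapop^*\partial I_{\mathcal K}(\Gapop U)=\Gapop^*N_{\mathcal K}(q).
\]
Here $\partial I_{\mathcal K}=N_{\mathcal K}$ is the standard identification of the subdifferential of an indicator with the normal cone in the Hilbert space $\Mmu$ equipped with $\langle\cdot,\cdot\rangle_{\Mmu}$.

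The only point needing care is that every adjoint and subdifferential must be taken with respect to the weighted inner products \eqref{eq:continuous_state_product} and \eqref{eq:continuous_memory_product}. The adjoint \eqref{eq:continuous_gap_adjoint} was computed in exactly this setting, so no extra factors of $\kappa$ arise beyond the $1/\kappa$ already in the formula.

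As a safeguard, the inclusion ``$\supseteq$'' can also be checked by hand. For $\eta\in N_{\mathcal K}(q)$ and $V\in\mathcal C$,
\[
\langle \Gapop^*\eta, V-U\rangle_{\Hmu}=\langle\eta,\Gapop V-q\rangle_{\Mmu}\le 0 .
\]
The reverse inclusion is where surjectivity of $\Gapop$ is essential, and this is precisely what the cited corollary supplies. If one prefers to avoid the citation, a direct route is available. Take $\Xi\in\partial I_{\mathcal C}(U)$. Testing with $V=U+(v,v)$ for all $v\in H$, which leaves the gap unchanged, gives a linear condition on $\Xi$. Writing $\Xi=(\xi_1,\xi_2)$, that condition reads $\kappa\xi_1+\Aggop\xi_2=0$ (up to sign conventions), so $\Xi=\Gapop^*(-\xi_2)$. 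Then testing with $V=(u,w-(p-q))$ for $p\in\mathcal K$, which changes the gap from $q$ to $p$, shows $-\xi_2\in N_{\mathcal K}(q)$.
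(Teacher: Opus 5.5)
Your proposal is correct and follows the paper's proof: you write $I_{\mathcal C}=I_{\mathcal K}\circ\Gapop$, verify the qualification condition of the linear-composition chain rule via the surjectivity of $\Gapop$, and then insert the adjoint formula \eqref{eq:continuous_gap_adjoint}. Your optional hand verification is also sound and gives a self-contained replacement for the citation. It first tests with the gap-neutral directions $(v,\mathbf v)$, $\mathbf v(r)\equiv v$, to obtain $\kappa\xi_1+\Aggop\xi_2=0$ exactly. It then tests with $(u,w-(p-q))$ to place $-\xi_2$ in $N_{\mathcal K}(q)$.
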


  \begin{proof}
    Notice $I_{\mathcal C}=I_{\mathcal K}\circ\Gapop$. Since $\mathcal K$ is nonempty, closed, and convex,
    $I_{\mathcal K}$ is proper, convex, and lower semicontinuous on $\Mmu$.
    Moreover, Proposition~\ref{prop:continuous_gap_map} shows that $\Gapop$ is bounded,
    linear, and surjective, and hence
    $D(I_{\mathcal K})-\operatorname{ran}\Gapop=\Mmu$. Consequently, the qualification
    condition in \cite[Corollary~16.53(i)]{bauschke_convex_2017} holds.
    The linear-composition chain rule therefore gives
    \[
   \partial I_{\mathcal C}(U)
    =\Gapop^*\partial_{\Mmu}I_{\mathcal K}(\Gapop U)
    =\Gapop^*N_{\mathcal K}(q).
    \]
    The adjoint formula \eqref{eq:continuous_gap_adjoint} proves the assertion.
  \end{proof}

  Comparing \eqref{eq:continuous_Dirichlet_subdifferential} and \eqref{eq:continuous_state_normal} with \eqref{eq:continuous_pi_balance_eta} and \eqref{eq:continuous_pi_play_eta}, there still exists a gap in characterizing the elements in $N_{\mathcal K}(q)$. 
  To this end, we introduce the metric projection $P_{K_r}\colon H\to K_r$ for $\mu$-almost every $r \in \Rcal$, defined by
  \begin{equation}
  P_{K_r}z:=\operatorname*{argmin}_{v\in K_r}\|z-v\|,\quad z\in H.
  \label{eq:metric_projection_definition}
  \end{equation}
  The projection theorem \cite[Theorem~3.16]{bauschke_convex_2017} guarantees that the minimizer exists and is unique and gives the equivalent characterization of $p=P_{K_r}z$ :
  \begin{equation}
  p\in K_r\ \text{ and }\ (z-p,v-p)_H\leq0
  \quad\forall v\in K_r.
  \label{eq:metric_projection_characterization}
  \end{equation}

  \begin{lemma}[Pointwise normal cone characterization]
    \label{lem:continuous_pointwise_normal}
    For every $q\in\mathcal K$,
    \begin{equation}
    N_{\mathcal K}(q)
    =\{\eta\in\Mmu\colon
          \eta(r)\in N_{K_r}(q(r))\,\text{ for $\mu$-almost every }r \in \Rcal\}.
    \label{eq:continuous_pointwise_normal}
    \end{equation}
  \end{lemma}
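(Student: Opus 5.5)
We prove the two inclusions separately. Fix $q\in\mathcal K$. By definition, $\eta\in N_{\mathcal K}(q)$ means $\eta\in\Mmu$ and
\[
\int_{\Rcal}c(r)\bigl(\eta(r),p(r)-q(r)\bigr)_H\,d\mu(r)\leq0\quad\forall p\in\mathcal K .
\]

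The inclusion ``$\supseteq$'' is immediate. If $\eta(r)\in N_{K_r}(q(r))$ for $\mu$-a.e. $r$, then for any $p\in\mathcal K$ we have $p(r)\in K_r$ a.e. Hence the integrand $c(r)(\eta(r),p(r)-q(r))_H$ is nonpositive a.e. It is integrable because $\eta,p,q\in\Mmu$, so its integral is $\leq0$.

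For ``$\subseteq$'' I would use the metric projection rather than a measurable-selection argument over all test points. Let $\eta\in N_{\mathcal K}(q)$ and define
\[
p(r):=P_{K_r}\bigl(q(r)+\eta(r)\bigr).
\]
The pointwise normal cone condition at $r$ is equivalent, by \eqref{eq:metric_projection_characterization}, to $P_{K_r}(q(r)+\eta(r))=q(r)$. So it suffices to show $p=q$ $\mu$-a.e. To do this I will verify that $p\in\mathcal K$ and then test the defining inequality with this $p$.

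\textbf{Step 1: $p\in\mathcal K$.} Pointwise $p(r)\in K_r$. For the $L^2$ bound, nonexpansiveness and $q(r)=P_{K_r}q(r)$ give
\[
\|p(r)-q(r)\|\leq\|\eta(r)\|,
\]
so $\|p\|_{\Mmu}\leq\|q\|_{\Mmu}+\|\eta\|_{\Mmu}$. The main obstacle is strong measurability of $r\mapsto p(r)$. Here I would invoke \eqref{eq:continuous_graph_measurability} together with completeness of $\mu$ (Assumption~\ref{ass:continuous_model}). Since the graph of $K$ is measurable and $\mu$ is complete, the map $r\mapsto \operatorname{dist}(z(r),K_r)$ is measurable for any measurable $z$, by the standard Castaing/measurable-projection theorem (e.g.\ Aumann's selection theorem and its corollaries). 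The projection $r\mapsto P_{K_r}z(r)$ is then measurable as the unique minimizer of a Carathéodory-type integrand $\|z(r)-v\|$ over $v\in K_r$. Separability of $H$ upgrades weak measurability to strong measurability via Pettis' theorem. I expect this to be the only delicate step, and I would cite the measurable-selection literature rather than reprove it.

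\textbf{Step 2: conclusion.} Testing the defining inequality with this $p$ gives
\[
\int c\,(\eta,p-q)_H\,d\mu\leq0 .
\]
From \eqref{eq:metric_projection_characterization} with $z=q+\eta$ and $v=q(r)\in K_r$ we get $(q+\eta-p,\,q-p)_H\leq0$ pointwise, i.e.
\[
\|q-p\|^2\leq(\eta,p-q)_H .
\]
Integrating against $c\,d\mu$ yields $\|q-p\|_{\Mmu}^2\leq0$. Since $c>0$, and $\nu_c$ and $\mu$ have the same null sets, $p=q$ $\mu$-a.e. Therefore $\eta(r)\in N_{K_r}(q(r))$ for $\mu$-a.e. $r$, which completes the proof.
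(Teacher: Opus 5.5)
Your proposal is correct and follows essentially the same route as the paper. The inclusion $\supseteq$ is obtained by integrating the pointwise normal inequality. The inclusion $\subseteq$ tests the $\Mmu$-normal inequality with $p(r)=P_{K_r}(q(r)+\eta(r))$ and combines it with the pointwise bound $\|p(r)-q(r)\|^2\leq(\eta(r),p(r)-q(r))_H$, exactly as you do. The paper likewise delegates the strong measurability of $p$ to a measurable-selection result (Proposition~\ref{prop:measurable_metric_projection}, via Castaing--Valadier), which relies on the graph measurability, completeness of $\mu$, and separability of $H$ that you invoke.
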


  \begin{proof}
    If $\eta(r)\in N_{K_r}(q(r))$ almost everywhere, integration of the
    pointwise normal inequality with respect to $\nu_c$ gives
    $\eta\in N_{\mathcal K}(q)$.
    Conversely, let $\eta\in N_{\mathcal K}(q)$ and define
    \begin{equation}
    p(r):=P_{K_r}(q(r)+\eta(r)).
    \label{eq:continuous_measurable_projection}
    \end{equation}
    Since $q+\eta$ is strongly measurable,
    Proposition~\ref{prop:measurable_metric_projection} in
    Appendix~\ref{app:measurable_projection} shows that $p$ is strongly
    measurable. Since $q(r)\in K_r$,
    \[
    \|p(r)-q(r)\|\leq\|\eta(r)\|,
    \]
    and hence $p\in\mathcal K$. The metric-projection inequality \eqref{eq:metric_projection_characterization}, tested with
    $q(r)\in K_r$, gives
    \[
    (q(r)+\eta(r)-p(r),q(r)-p(r))_H\leq0,
    \]
    and hence
    \[
    (\eta(r),p(r)-q(r))_H\geq\|p(r)-q(r)\|^2.
    \]
    On the other hand, the normal inequality in $\Mmu$, tested with $p$, gives
    \[
    \int_{\Rcal}c(r)(\eta(r),p(r)-q(r))_H\,d\mu(r)\leq0.
    \]
    Thus $p(r)=q(r)$ almost everywhere. The characterization
    $q(r)=P_{K_r}(q(r)+\eta(r))$ is equivalent to
    $\eta(r)\in N_{K_r}(q(r))$, proving the reverse inclusion.
  \end{proof}

  \begin{proposition}[Hilbert subdifferential]
    \label{prop:continuous_subdifferential}
    The domain of the subdifferential of $\Psi$ is
    \begin{equation}
      D(\partial\Psi)
      =\{(u,w)\in D(\Psi):\Delta u\in H\}.
      \label{eq:continuous_operator_domain}
      \end{equation}
    For $U=(u,w)\in D(\partial\Psi)$, with $\Delta u\in H$ and $q=\Gapop U \in \mathcal K$, the subdifferential of $\Psi$ is
    \begin{equation}
    \partial\Psi(u,w)
    =\left\{
    \left(
      \frac1\kappa\left[-\Delta u+\Aggop\eta\right],
      -\eta
    \right):
    \begin{array}{l}
      \eta\in\Mmu,\\
      \eta(r)\in N_{K_r}(q(r))\quad \text{for }\mu\text{-a.e. } r \in \Rcal
    \end{array}
    \right\}.
    \label{eq:continuous_subdifferential}
    \end{equation}
  \end{proposition}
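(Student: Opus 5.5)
The plan is to combine the two previously characterized pieces through the sum rule for subdifferentials, $\partial\Psi=\partial\Phi+\partial I_{\mathcal C}$. Then Proposition~\ref{prop:continuous_Dirichlet_subdifferential}, Proposition~\ref{prop:continuous_state_normal} and Lemma~\ref{lem:continuous_pointwise_normal} give the explicit formula \eqref{eq:continuous_subdifferential}. Only the inclusion $\partial\Phi+\partial I_{\mathcal C}\subset\partial\Psi$ holds for free; the reverse inclusion needs a qualification condition.

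The reverse inclusion is the main obstacle. Since $D(\Phi)=V\times\Mmu$ is not open and $I_{\mathcal C}$ has nonempty interior only in special cases, the usual criteria fail. Neither $\Phi$ nor $I_{\mathcal C}$ is continuous at a point of the other's domain, so Moreau--Rockafellar in its simplest form does not apply. I would first try the Attouch--Brézis condition \cite[Corollary~16.48]{bauschke_convex_2017}: $\operatorname{cone}(D(\Phi)-D(I_{\mathcal C}))$ should be a closed subspace. I claim in fact $D(\Phi)-\mathcal C=\Hmu$. Given $(v,z)\in\Hmu$, write $(v,z)=(0,z-v)-(-v,-v)$. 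The first term lies in $V\times\Mmu=D(\Phi)$. The second, $(-v,-v)$, has gap $[\Gapop(-v,-v)](r)=-v+v=0\in K_r$, so it lies in $\mathcal C$. Hence the sum rule holds for every $U$:
\[
\partial\Psi(U)=\partial\Phi(U)+\partial I_{\mathcal C}(U).
\]

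For the domain, note $\partial\Phi(U)+\partial I_{\mathcal C}(U)$ is nonempty exactly when both summands are nonempty. The set $\partial I_{\mathcal C}(U)$ always contains $0$ for $U\in\mathcal C$, and is empty otherwise. By \eqref{eq:continuous_Dirichlet_subdifferential_domain}, $\partial\Phi(U)\neq\emptyset$ iff $u\in V$ and $\Delta u\in H$. This gives $D(\partial\Psi)=\{U\in\mathcal C:u\in V,\ \Delta u\in H\}$, which by \eqref{eq:continuous_energy_domain} is \eqref{eq:continuous_operator_domain}.

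For the formula, I add the single element $(-\Delta u/\kappa,0)$ to the set $\{(\Aggop\eta/\kappa,-\eta):\eta\in N_{\mathcal K}(q)\}$ from \eqref{eq:continuous_state_normal}. Then I replace $N_{\mathcal K}(q)$ by its pointwise description \eqref{eq:continuous_pointwise_normal}, which yields \eqref{eq:continuous_subdifferential} directly.

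If the Attouch--Brézis citation proves inconvenient, I would fall back on a direct argument. Let $(g,\zeta)\in\partial\Psi(U)$ and test the subgradient inequality along directions $(u+s\varphi,w+s\varphi)$, $\varphi\in V$; these preserve the gap and hence stay in $\mathcal C$. This identifies $\kappa g+\Aggop(-\zeta)=-\Delta u$ weakly, so $\Delta u\in H$. Then $(g,\zeta)-\partial\Phi(U)$ is a subgradient of $I_{\mathcal C}$, which I would verify by directly splitting the inequality.
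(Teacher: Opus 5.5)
Your proposal is correct and follows the paper's proof: it verifies the same qualification condition $D(\Phi)-\mathcal C=\Hmu$ through the decomposition $(v,z)=(0,z-v)-(-v,-v)$, where the constant memory map lies in $\Mmu$ because $S_c<\infty$, applies the sum rule, and reads off the domain and formula from Propositions~\ref{prop:continuous_Dirichlet_subdifferential} and~\ref{prop:continuous_state_normal} together with Lemma~\ref{lem:continuous_pointwise_normal}. In your unused fallback, the gap-preserving test directions actually give $\kappa g+\Aggop\zeta=-\Delta u$ rather than $\kappa g+\Aggop(-\zeta)=-\Delta u$, and the final ``splitting'' step would also need a scaling argument $s\to0$ and the density of $\mathcal C\cap(V\times\Mmu)$ in $\mathcal C$, which follows from a gap-preserving constant shift.
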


  \begin{proof}
    Both $\Phi$ and $I_{\mathcal C}$ are proper, convex, and lower
    semicontinuous on $\Hmu$. We verify the qualification condition for
    the sum rule \cite[Corollary~16.48(i)]{bauschke_convex_2017} by showing
    \begin{equation}
      D(\Phi)-\mathcal C=\Hmu.
      \label{eq:continuous_sum_qualification}
    \end{equation}
    For any $(a,b)\in\Hmu$, the constant memory map
    $\mathbf a(r):=a$ belongs to $\Mmu$, since
    $\|\mathbf a\|_{\Mmu}^2=S_c\|a\|^2<\infty$.
    Now $(0,b-\mathbf a)\in D(\Phi)=V\times\Mmu$, whereas
    $(-a,-\mathbf a)\in\mathcal C$ because its gap is zero. Thus
    \[
      (a,b)=(0,b-\mathbf a)-(-a,-\mathbf a)
      \in D(\Phi)-\mathcal C,
    \]
    proving \eqref{eq:continuous_sum_qualification}.
    In particular, $0$ belongs to the strong relative interior of
    $D(\Phi)-D(I_{\mathcal C})$, as required by the cited sum rule.
    Consequently,
    \begin{equation}
     \partial \Psi(U)
      =\partial\Phi(U)+\partial I_{\mathcal C}(U).
      \label{eq:continuous_sum_rule}
    \end{equation}
    Propositions~\ref{prop:continuous_Dirichlet_subdifferential}
    and~\ref{prop:continuous_state_normal} now give \eqref{eq:continuous_operator_domain} and
    \eqref{eq:continuous_subdifferential}.
  \end{proof}

\begin{corollary}[Strong subgradient remainder]
\label{cor:continuous_strong_remainder}
Let $U,\widehat U\in D(\Psi)$ and $\Xi\in\partial\Psi(U)$. Then
\begin{equation}
 \Psi(\widehat U)-\Psi(U)
 -\langle\Xi,\widehat U-U\rangle_{\Hmu}
 \geq\frac12\|\nabla(\widehat u-u)\|^2.
 \label{eq:continuous_strong_remainder}
\end{equation}
\end{corollary}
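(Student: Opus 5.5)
Since $\Xi\in\partial\Psi(U)$, we have $U\in D(\partial\Psi)$. By Proposition~\ref{prop:continuous_subdifferential} we can therefore write $\Xi=\bigl(\frac1\kappa[-\Delta u+\Aggop\eta],-\eta\bigr)$, where $\Delta u\in H$ and $\eta\in N_{\mathcal K}(q)$ with $q=\Gapop U$. This uses the identification of Lemma~\ref{lem:continuous_pointwise_normal}. The plan is to split the pairing $\langle\Xi,\widehat U-U\rangle_{\Hmu}$ into a Dirichlet part and a constraint part, using the sum-rule decomposition \eqref{eq:continuous_sum_rule}:
\[
\Xi=\Bigl(-\tfrac1\kappa\Delta u,0\Bigr)+\Gapop^*\eta .
\]
Both $U$ and $\widehat U$ lie in $D(\Psi)$, so $\Psi(\widehat U)-\Psi(U)=\frac12\|\nabla\widehat u\|^2-\frac12\|\nabla u\|^2$, and the indicator terms vanish.

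For the constraint part, the adjoint relation \eqref{eq:continuous_gap_adjoint} gives
\[
\langle\Gapop^*\eta,\widehat U-U\rangle_{\Hmu}=\langle\eta,\Gapop\widehat U-q\rangle_{\Mmu}.
\]
Since $\widehat U\in\mathcal C$, we have $\Gapop\widehat U\in\mathcal K$. Because $\eta\in N_{\mathcal K}(q)$, this pairing is $\le0$, so subtracting it can only increase the left-hand side of \eqref{eq:continuous_strong_remainder}. It therefore suffices to bound the Dirichlet part.

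For the Dirichlet part, the weighted inner product \eqref{eq:continuous_state_product} gives
\[
\langle(-\tfrac1\kappa\Delta u,0),\widehat U-U\rangle_{\Hmu}=(-\Delta u,\widehat u-u)_H .
\]
Since $\widehat u-u\in V$ and $\Delta u\in H$, integration by parts in $H_0^1(\Omega)$ (the distributional definition of $\Delta u$ tested against $V$) turns this into $(\nabla u,\nabla(\widehat u-u))_H$. The algebraic identity
\[
\tfrac12\|\nabla\widehat u\|^2-\tfrac12\|\nabla u\|^2-(\nabla u,\nabla(\widehat u-u))=\tfrac12\|\nabla(\widehat u-u)\|^2
\]
then gives equality for the Dirichlet part. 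Combined with the nonpositive constraint pairing, this yields the inequality.

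The only delicate point is justifying the Green identity $(-\Delta u,v)_H=(\nabla u,\nabla v)_H$ for $v\in V$ when $u\in V$ with $\Delta u\in H$. This follows from density of $C_c^\infty(\Omega)$ in $V$ and continuity of both sides in $v\in V$. Alternatively, it is already encoded in $\partial_HE(u)=\{-\Delta u\}$: the subgradient inequality for $E$ tested at $\widehat u$ only gives $\ge0$, so one uses the exact quadratic expansion instead.
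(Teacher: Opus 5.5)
Your proof is correct and follows essentially the same route as the paper. You decompose $\Xi$ via Proposition~\ref{prop:continuous_subdifferential} and use the exact quadratic expansion of the Dirichlet energy. You then drop the constraint pairing $\langle\eta,\Gapop\widehat U-q\rangle_{\Mmu}\le 0$ by the normal-cone inequality, and your density argument for the Green identity fills in a step the paper leaves implicit.
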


\begin{proof}
  For any $ U=( u, w)$ and $\widehat U=(\widehat u,\widehat w)\in D(\Psi)$ with
  $\widehat q=\Gapop\widehat U$, take $\Xi \in\partial\Psi(U) $ and the exact quadratic expansion gives
  \begin{align}
   &\Psi(\widehat U)-\Psi(U)
   -\langle\Xi,\widehat U-U\rangle_{\Hmu}\notag\\
   &\quad=E(\widehat u)-E(u)
   -(-\Delta u,\widehat u-u)_H
   -\int_{\Rcal}c(r)(\eta(r),\widehat q(r)-q(r))_H\,d\mu(r)\notag\\
   &\quad=\frac12\|\nabla(\widehat u-u)\|^2
   -\int_{\Rcal}c(r)(\eta(r),\widehat q(r)-q(r))_H\,d\mu(r)\notag\\
   &\quad\geq\frac12\|\nabla(\widehat u-u)\|^2\geq0.
   \label{eq:continuous_subgradient_sufficiency}
  \end{align}
  The last inequality is the pointwise normal inequality.
\end{proof}

\subsection{Equivalence and well-posedness}

\begin{theorem}[Equivalence]
\label{thm:continuous_physical_equivalence}
  Let $U=(u,w)$ and $U^0=(u^0,w^0)$ satisfy the assumptions in Problem~\ref{prob:parabolic_hysteresis_problem}.
  Then
  \eqref{eq:continuous_pi_balance}--\eqref{eq:continuous_physical_initial_boundary_data}
  holds if and only if 
  \begin{equation}
  U_t+\partial\Psi(U)\ni F
  \quad\text{in }\Hmu\quad\text{for almost every }t,
  \qquad U(0)=U^0,
  \label{eq:continuous_gradient_flow}
  \end{equation}
  where $F(t):=(f(t)/\kappa,0)\in\Hmu$.
\end{theorem}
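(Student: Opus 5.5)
The plan is to prove both directions pointwise in time, using the characterization of $\partial\Psi$ in Proposition~\ref{prop:continuous_subdifferential} as the bridge. Fix $t$ in the full-measure set where all the relevant relations hold, and write $U_t=(\partial_t u,\partial_t w)$. By Proposition~\ref{prop:continuous_subdifferential}, the inclusion $F-U_t\in\partial\Psi(U)$ is equivalent to three conditions: $U\in D(\Psi)$, $\Delta u\in H$, and the existence of some $\eta\in\Mmu$ with $\eta(r)\in N_{K_r}(q(r))$ $\mu$-a.e. such that
\[
\Bigl(\frac{f}{\kappa}-\partial_tu,\,-\partial_tw\Bigr)
=\Bigl(\frac1\kappa\bigl[-\Delta u+\Aggop\eta\bigr],\,-\eta\Bigr).
\]
The second component forces $\eta=\partial_tw$. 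The first component then reads $\kappa\partial_tu+\Aggop\partial_tw-\Delta u=f$ in $H$. So the whole proof reduces to matching this componentwise system with \eqref{eq:continuous_pi_balance}--\eqref{eq:continuous_pi_play}.

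For the direction ``$\Leftarrow$'', assume \eqref{eq:continuous_gradient_flow} holds. Then $\eta=\partial_tw$ lies in $\Mmu$, since $w\in H^1(0,T;\Mmu)$, and it satisfies $\partial_tw(r)\in N_{K_r}(q(r))$ a.e.; this is \eqref{eq:continuous_pi_play}. The constraint $q(r)\in K_r$ comes from $U\in D(\Psi)$. To get \eqref{eq:continuous_pi_balance}, test the $H$-identity with $\varphi\in V$, integrate $-\Delta u$ by parts to obtain $(\nabla u,\nabla\varphi)_H$, and use
\[
(\Aggop\partial_tw,\varphi)_H=\int_{\Rcal}c(r)(\partial_tw(r),\varphi)_H\,d\mu(r).
\]
This interchange of the Bochner integral with a bounded linear functional is justified by \eqref{eq:continuous_memory_aggregate_bound}.

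For the direction ``$\Rightarrow$'', assume \eqref{eq:continuous_pi_balance}--\eqref{eq:continuous_pi_play} hold and set $\eta:=\partial_tw(t)\in\Mmu$. Condition \eqref{eq:continuous_pi_play} gives $\Gapop U\in\mathcal K$, and together with $u\in V$ a.e. this yields $U\in D(\Psi)$. The main obstacle is the domain condition $\Delta u\in H$, which is not part of the weak formulation and must be recovered from it. Rewrite \eqref{eq:continuous_pi_balance} as
\[
(\nabla u,\nabla\varphi)_H=(g,\varphi)_H\quad\forall\varphi\in V,
\qquad g:=f-\kappa\partial_tu-\Aggop\partial_tw.
\]
For a.e. $t$ we have $g(t)\in H$, because $f,\partial_tu\in L^2(0,T;H)$ and $\Aggop$ is bounded. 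Testing with $\varphi\in C_c^\infty(\Omega)$ shows that the distributional Laplacian satisfies $-\Delta u=g\in H$. Hence $u\in\{v\in V:\Delta v\in H\}=D(\partial_HE)$, and Proposition~\ref{prop:continuous_Dirichlet_subdifferential} applies. The first-component identity then holds in $H$, and the componentwise characterization above gives $F-U_t\in\partial\Psi(U)$. No elliptic regularity beyond this distributional identification is needed, since the domain of $\partial\Psi$ only requires $\Delta u\in H$.

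Finally, the initial conditions \eqref{eq:continuous_physical_initial_boundary_data} and $U(0)=U^0$ coincide by definition. The regularity classes match as well: $U\in H^1(0,T;\Hmu)$ holds if and only if $u\in H^1(0,T;H)$ and $w\in H^1(0,T;\Mmu)$. This completes the equivalence.
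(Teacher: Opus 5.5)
Your proposal is correct and follows essentially the same route as the paper: both directions are read off componentwise from the characterization of $\partial\Psi$ in Proposition~\ref{prop:continuous_subdifferential}. The second component forces $\eta=\partial_t w$, and the first component matches the balance after testing with $\varphi\in V$. Your explicit recovery of $-\Delta u=f-\kappa\partial_tu-\Aggop\partial_tw\in H$ by testing with $C_c^\infty(\Omega)$ functions spells out a step the paper states only briefly in the converse direction.
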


\begin{proof}
Suppose first that \eqref{eq:continuous_gradient_flow} holds. For almost every $t$, by
Proposition~\ref{prop:continuous_subdifferential}, there exist $u \in V$ with $\Delta u\in H$ and
$\eta\in\Mmu$ with $\eta(r)\in N_{K_r}(q(r))$, $q(r)=u-w(r)\in K_r$ for $\mu\text{-a.e. } r \in \Rcal$,
such that the memory component and the field component are
\[
 w_t-\eta=0\quad \text{in } \Mmu,
 \qquad
 u_t+\frac1\kappa\left(-\Delta u+\Aggop\eta\right)=\frac f\kappa \quad \text{in } H.
\]
Thus using $\eta=w_t$, multiplication of the second identity by $\kappa$ and testing with arbitrary $\varphi\in V$ gives \eqref{eq:continuous_pi_balance}; the pointwise normal inclusion gives \eqref{eq:continuous_pi_play}.

Conversely, suppose the canonical system holds in the stated solution class
and set $\eta=w_t\in\Mmu$. The play inclusion makes $\eta$ an admissible
normal selection in \eqref{eq:continuous_subdifferential}.
In addition, $ \Delta u=\kappa u_t+\Aggop w_t-f \in L^2(0,T;H)$. 
Since all terms belong to $H$ and $V$ is dense in $H$, the weak balance
\eqref{eq:continuous_pi_balance} is equivalent to the first component of
\eqref{eq:continuous_gradient_flow}; the identity $w_t-\eta=0$ is its
second component.

\end{proof}

\begin{remark}
  The success of the state augmentation is tied to the state-sufficient memory structure of play operators: the current play state carries all information from the past input that is relevant for future evolution. Consequently, although the play operator is history dependent as an input-output mapping, its internal-state realization is local in time. The PI superposition inherits this property through the parameterized family of play states.
\end{remark}
  

\begin{corollary}[Well-posedness and natural regularity]
\label{prop:continuous_wellposedness}
  Let Assumption~\ref{ass:continuous_model} hold. Problem~\ref{prob:parabolic_hysteresis_problem}
  has a unique solution with
  \begin{equation}
  u\in H^1(0,T;H)\cap L^\infty(0,T;V),
  \qquad
  w\in H^1(0,T;\Mmu).
  \label{eq:continuous_natural_time_regularity}
  \end{equation}
  The balance also gives
  \begin{equation}
  \Delta u=\kappa u_t+\Aggop w_t-f
  \in L^2(0,T;H).
  \label{eq:continuous_derived_laplacian}
  \end{equation}
\end{corollary}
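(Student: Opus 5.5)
The plan is to apply the Brézis theory of subgradient flows for proper, convex, lower semicontinuous functionals on a Hilbert space, and then transfer the result back through Theorem~\ref{thm:continuous_physical_equivalence}. By Proposition~\ref{prop:continuous_convex_energy}, $\Psi$ is proper, convex and lower semicontinuous on $\Hmu$. Since $f\in L^2(0,T;H)$, we have $F=(f/\kappa,0)\in L^2(0,T;\Hmu)$.

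The initial datum lies in the effective domain. Indeed, $u^0\in V$ and $q^0=\Gapop U^0\in\mathcal K$, so $U^0\in D(\Psi)$ by \eqref{eq:continuous_energy_domain}. The classical result (Brézis, \emph{Opérateurs maximaux monotones}, Theorem~3.6) then yields a unique strong solution with
\[
U\in H^1(0,T;\Hmu),\qquad U(t)\in D(\partial\Psi)\ \text{a.e.},\qquad \Psi(U)\in L^\infty(0,T)\cap W^{1,1}(0,T),
\]
together with the energy identity $\|U_t\|_{\Hmu}^2+\frac{d}{dt}\Psi(U)=\langle F,U_t\rangle_{\Hmu}$. Uniqueness holds for every $U^0\in\overline{D(\Psi)}$ by the contraction estimate coming from monotonicity of $\partial\Psi$.

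Next I translate this into the stated regularity. The weighted norm \eqref{eq:continuous_state_product} is equivalent componentwise, so $U\in H^1(0,T;\Hmu)$ gives $u\in H^1(0,T;H)$ and $w\in H^1(0,T;\Mmu)$. Because $U(t)\in D(\Psi)$ for all $t$, we have $\Psi(U(t))=\frac12\|\nabla u(t)\|^2$, so boundedness of $\Psi(U)$ gives $u\in L^\infty(0,T;V)$. I would also bound this explicitly. Integrating the energy identity and applying Young's inequality to $\langle F,U_t\rangle_{\Hmu}$ gives
\[
\tfrac12\int_0^t\|U_t\|_{\Hmu}^2\,ds+\tfrac12\|\nabla u(t)\|^2\le\tfrac12\|\nabla u^0\|^2+\tfrac{1}{2\kappa}\int_0^t\|f\|^2\,ds .
\]
Measurability of $u$ as a $V$-valued function is not an issue, since $u$ is continuous into $H$ and weakly continuous into $V$.

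The solution belongs to the class required in Problem~\ref{prob:parabolic_hysteresis_problem}, so Theorem~\ref{thm:continuous_physical_equivalence} shows that it solves \eqref{eq:continuous_pi_balance}--\eqref{eq:continuous_physical_initial_boundary_data}. Conversely, any solution of Problem~\ref{prob:parabolic_hysteresis_problem} solves \eqref{eq:continuous_gradient_flow}, so uniqueness transfers as well.

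Finally, \eqref{eq:continuous_derived_laplacian} is read off from the first component of the inclusion, using the selection $\eta=w_t$ as in the proof of Theorem~\ref{thm:continuous_physical_equivalence}. Each term on the right is in $L^2(0,T;H)$: $u_t$ directly, and $\Aggop w_t$ by the bound \eqref{eq:continuous_memory_aggregate_bound}.

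The main obstacle is the identification $\eta=w_t$ being a legitimate, measurable-in-time selection. In Brézis' theorem, $F-U_t\in\partial\Psi(U)$ holds a.e., and the second component of this element equals $-w_t$. Hence $\eta:=w_t(t)$ is precisely the normal selection from Proposition~\ref{prop:continuous_subdifferential}, and its measurability is inherited from $U_t$, so no measurable-selection theorem is needed. I would state this explicitly to close the argument.
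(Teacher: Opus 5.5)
Your proposal is correct and follows essentially the same route as the paper. Both arguments use maximal monotonicity of $\partial\Psi$ and the strong-solution theorem for $U^0\in D(\Psi)$, $F\in L^2(0,T;\Hmu)$ (you cite Brézis, the paper cites Barbu's Theorems~4.2 and~4.11). Both read off the component regularity and $u\in L^\infty(0,T;V)$ from $\Psi(U)\in W^{1,1}(0,T)$, take $\eta=w_t$ from $F-U_t\in\partial\Psi(U)$, and transfer existence and uniqueness through the equivalence theorem. Your explicit energy bound is a small addition that the paper only invokes later, in the fully discrete estimate.
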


\begin{proof}
Proposition~\ref{prop:continuous_convex_energy} shows that $\Psi$ is proper, convex, and lower semicontinuous on $\Hmu$, so $\partial\Psi$ is maximal monotone. Since $U^0\in D(\Psi)$ and $F\in L^2(0,T;\Hmu)$, \cite[Theorems~4.2 and~4.11]{barbu_nonlinear_2010} applied to \eqref{eq:continuous_gradient_flow} gives the unique mild solution, and, because $U^0\in D(\Psi)$, the strong-solution regularity $U\in H^1(0,T;\Hmu)$ and $\Psi(U)\in W^{1,1}(0,T)$. In particular, $u\in L^\infty(0,T;V)$ and $\Gapop U(t)\in\mathcal K$ for almost every $t$. Moreover,
\[
 \Xi:=F-U_t\in L^2(0,T;\Hmu),
 \qquad \Xi(t)\in\partial\Psi(U(t))
 \quad\text{for almost every }t.
\]
Writing $\Xi=(\xi,\beta)$,
Theorem~\ref{prop:continuous_subdifferential} gives
$\eta=-\beta\in L^2(0,T;\Mmu)$ and
\[
 -\Delta u=\kappa\xi-\Aggop\eta.
\]
The aggregate bound \eqref{eq:continuous_memory_aggregate_bound} therefore
gives $\Delta u\in L^2(0,T;H)$. Thus all hypotheses of
Proposition~\ref{thm:continuous_physical_equivalence} are satisfied, and
the abstract solution is exactly the unique physical solution. The
components of $U\in H^1(0,T;\Hmu)$ give $u\in H^1(0,T;H)$ and
$w\in H^1(0,T;\Mmu)$. Finally,
\eqref{eq:continuous_derived_laplacian} follows from the physical balance
and \eqref{eq:continuous_memory_aggregate_bound}.
\end{proof}

\begin{remark}
For the canonical pointwise family $K_r=K_r^{\mathrm{pt}}$ in \eqref{eq:standard_play_constraint_family}, classical well-posedness is also contained in Visintin's treatment: the relevant $m$-accretive realizations and integral-solution result are Theorems~VIII.2.5, VIII.3.1(ii), and VIII.6.2(ii), while strong time regularity is supplied by Theorem~IX.2.7 and the following Remark~(i), which extends the argument from generalized plays to continuous generalized PI operators of play type \cite[Chapter~VIII, Sections~2--3 and~6; Chapter~IX, Section~2]{visintinDifferentialModelsHysteresis1994}.
\end{remark}

\section{Temporal discretization and error estimates}
\label{sec:continuous_pi_time}

  We now apply the implicit Euler method to the subgradient-flow formulation established in Section~\ref{sec:continuous_pi_structure}. We first derive a posteriori error estimates for arbitrary time partitions and then use them to obtain a priori convergence rates, ranging from the half-order estimate under energy-level data to first-order convergence under bounded-variation forcing and an appropriate initial-domain condition.

  \subsection{Implicit Euler discretization}

Let
\[
 0=t_0<t_1<\cdots<t_N=T,\qquad
 \tau_n=t_n-t_{n-1},\qquad
 \tau=\max_{1\leq n\leq N}\tau_n,
\]
be an arbitrary partition and set $\delta_nz^n=(z^n-z^{n-1})/\tau_n$ for any sequence $(z^n)_{n=0}^N$. Write
\[
 U_\tau^n=(u_\tau^n,w_\tau^n),\qquad F^n=(f^n/\kappa,0),
\]
where $U_\tau^0\in D(\Psi)$ is a prescribed initial state and $f^n\in H$ is the chosen approximation of the load. The implicit Euler discretization of \eqref{eq:continuous_gradient_flow} is to seek a sequence $U_\tau^n\in D(\partial\Psi)$, $n=1,\ldots,N$, satisfying
\begin{equation}
 \delta_nU_\tau^n+\partial\Psi(U_\tau^n)\ni F^n
 \quad\text{in }\Hmu.
 \label{eq:continuous_abstract_euler}
\end{equation}
Since $\partial\Psi$ is maximal monotone, its resolvent is everywhere defined, single-valued, and nonexpansive \citep{barbu_nonlinear_2010}. Hence, 
\[
 U_\tau^n=(I+\tau_n\partial\Psi)^{-1}
 (U_\tau^{n-1}+\tau_nF^n)
\]
defines the unique solution of \eqref{eq:continuous_abstract_euler} and places it in $D(\partial\Psi)$.

The sequence is equivalently characterized as the unique solution
$u_\tau^n\in V$, $w_\tau^n\in\Mmu$ of
\begin{gather}
  \kappa(\delta_nu_\tau^n,\varphi)_H+
 (\Aggop\delta_nw_\tau^n,\varphi)_H
 +(\nabla u_\tau^n,\nabla\varphi)_H
 =(f^n,\varphi)_H,\quad
 \forall\varphi\in V,\\
 q_\tau^n(r):=u_\tau^n-w_\tau^n(r) \in K_r ,\quad \delta_nw_\tau^n(r)\in
 N_{K_r}\left(q_\tau^n(r)\right)
 \quad\text{for $\mu$-a.e. }r\in\Rcal.
 \label{eq:continuous_physical_euler_play}
\end{gather}
Notice for the closed convex set $K_r$ in \eqref{eq:continuous_physical_euler_play}, the geometric relation  $q_\tau^n(r) = P_{K_r}\left(q_\tau^n(r) + \tau_n\delta_nw_\tau^n(r)\right) $ holds. Hence the hysteresis unknown $w_\tau^n$   has the explicit formula
\begin{equation}
 w_\tau^n(r)=u_\tau^n-P_{K_r}\left(u_\tau^n-w_\tau^{n-1}(r)\right), \quad\text{for $\mu$-a.e. }r\in\Rcal,
\end{equation}
which is more practical in numerical computation.


\begin{scheme}[Implicit Euler scheme]
  Given $(u_\tau^0,w_\tau^0)\in D(\Psi)$ and $f^n\in H$, seek $u_\tau^n\in V$, $n=1,\ldots,N$, satisfying
  \begin{gather}
    \kappa(\delta_nu_\tau^n,\varphi)_H+
   (\Aggop\delta_nw_\tau^n,\varphi)_H
   +(\nabla u_\tau^n,\nabla\varphi)_H
   =(f^n,\varphi)_H,\quad
   \forall\varphi\in V,
   \label{eq:continuous_physical_euler_balance}\\
   w_\tau^n(r)=u_\tau^n-P_{K_r}\left(u_\tau^n-w_\tau^{n-1}(r)\right), \quad\text{for $\mu$-a.e. }r\in\Rcal,
   \label{eq:continuous_projection_update_w}
  \end{gather}
\end{scheme}

\begin{remark}
  For the characteristic $K_r=K_r^{\mathrm{pt}}$ in \eqref{eq:standard_play_constraint_family}, $P_{K_r}$ is the pointwise clipping map \eqref{eq:pointwise_clipping_map}.
  Hence \eqref{eq:continuous_projection_update_w} becomes
  \begin{equation}
    w_\tau^n(r) = u_\tau^n-\min\{r,\max\{-r,u_\tau^n-w_\tau^{n-1}(r)\}\} = \max\{u_\tau^n-r,\min\{u_\tau^n+r,w_\tau^{n-1}(r)\}\},
  \end{equation}
  which is exactly the update formula for play operators in \cite[Example~2.1.2]{brokateHysteresisPhaseTransitions1996}. For general characteristics, it may need to solve an optimization problem \eqref{eq:metric_projection_definition}.
\end{remark}

On $I_n=(t_{n-1},t_n]$, define
\begin{align}
 &U_\tau(t)=\frac{t_n-t}{\tau_n}U_\tau^{n-1}
   +\frac{t-t_{n-1}}{\tau_n}U_\tau^n,
 \qquad \overline U_\tau(t)=U_\tau^n,
 \qquad \underline U_\tau(t)=U_\tau^{n-1},
 \label{eq:continuous_state_reconstructions}\\
 &\overline F(t)=F^n,
 \qquad \overline f(t)=f^n.
 \label{eq:continuous_force_reconstruction}
\end{align}
For $U=(u,w)$ and $\widehat U=(\widehat u,\widehat w)$ in $D(\Psi)$, define
\[
 \sigma(U;\widehat U)
 :=\frac12\|\nabla(u-\widehat u)\|^2.
\]
Corollary~\ref{cor:continuous_strong_remainder} shows that $\sigma$ supplies the strengthened subgradient remainder required by the abstract a posteriori analysis of \citep{nochetto_error_1998,nochetto_posteriori_2000}. Let
\begin{align}
 &\mathfrak E_H
 :=\max_{t\in[0,T]}\|U(t)-U_\tau(t)\|_{\Hmu},\notag\\
 &\mathfrak E_\sigma
 :=\left(2\int_0^T
 \{\sigma(U(t);U_\tau(t))
  +\sigma(\overline U_\tau(t);U(t))\}\,dt\right)^{1/2},\notag\\
 &\mathfrak E
 :=\max\{\mathfrak E_H,\mathfrak E_\sigma\}.
 \label{eq:continuous_abstract_error}
\end{align}
In original variables,
\begin{align}
 &\mathfrak E_H
 =\max_{t\in[0,T]}\left(
 \kappa\|u(t)-u_\tau(t)\|^2+\|w(t)-w_\tau(t)\|_{\Mmu}^2
 \right)^{1/2},\notag\\
 &\mathfrak E_\sigma
 =\left(\|u-u_\tau\|_{L^2(0,T;V)}^2
 +\|u-\overline u_\tau\|_{L^2(0,T;V)}^2
 \right)^{1/2}.
\end{align}
Since $\kappa\geq1$,
\begin{equation}
  \max\Bigg\{
 \max_{t\in[0,T]}\left(\|u(t)-u_\tau(t)\|^2
 +\|w(t)-w_\tau(t)\|_{\Mmu}^2\right)^{1/2},
 \left(\|u-u_\tau\|_{L^2(0,T;V)}^2
 +\|u-\overline u_\tau\|_{L^2(0,T;V)}^2\right)^{1/2}\Bigg\}
 \leq \mathfrak E.
\end{equation}
Hence estimates for $\mathfrak E$ immediately control the desired time discretization error.

\subsection{A posteriori error estimates}

For $1\leq n\leq N$, define
\begin{equation}
 E_n:=
 \langle F^n-\delta_nU_\tau^n,\delta_nU_\tau^n\rangle_{\Hmu}
 -\frac{\Psi(U_\tau^n)-\Psi(U_\tau^{n-1})}{\tau_n}.
 \label{eq:continuous_estimator_En}
\end{equation}
If $\Xi^n=F^n-\delta_nU_\tau^n\in\partial\Psi(U_\tau^n)$, the strong subgradient inequality tested with $U_\tau^{n-1}$ gives
\begin{equation}
 \tau_nE_n\geq
 \frac12\|\nabla(u_\tau^n-u_\tau^{n-1})\|^2\geq0.
 \label{eq:continuous_En_nonnegative}
\end{equation}

\begin{theorem}[A posteriori physical error estimate]
\label{thm:continuous_aposteriori}
Under Assumption~\ref{ass:continuous_model}, let $U=(u,w)$ be the exact solution of \eqref{eq:continuous_pi_balance}-- \eqref{eq:continuous_physical_initial_boundary_data}, and let $U_\tau^n=(u_\tau^n,w_\tau^n)$, $n=0,\ldots,N$, solve the implicit Euler scheme \eqref{eq:continuous_physical_euler_balance}-- \eqref{eq:continuous_projection_update_w}. Let $U_\tau$ and $\overline U_\tau$ be the reconstructions in \eqref{eq:continuous_state_reconstructions}. Then
\begin{align}
 &\max\Bigg\{
 \max_{t\in[0,T]}\left(\|u(t)-u_\tau(t)\|^2
 +\|w(t)-w_\tau(t)\|_{\Mmu}^2\right)^{1/2},
 \left(\|u-u_\tau\|_{L^2(0,T;V)}^2
 +\|u-\overline u_\tau\|_{L^2(0,T;V)}^2\right)^{1/2}\Bigg\}\notag\\
 &\quad\leq
 \left(
 \kappa\|u(0)-u_\tau^0\|^2
 +\|w(0)-w_\tau^0\|_{\Mmu}^2
 \right)^{1/2}
 +\left(\sum_{n=1}^N\tau_n^2E_n\right)^{1/2}
 +\frac1{\sqrt\kappa}
 \|f-\overline f\|_{L^1(0,T;H)},
 \label{eq:continuous_aposteriori_En}
\end{align}
where 
\begin{equation}
  E_n=(f^n-\kappa\delta_nu_\tau^n,\delta_nu_\tau^n)_H
  -\|\delta_nw_\tau^n\|_{\Mmu}^2
  -\frac{\|\nabla u_\tau^n\|^2-\|\nabla u_\tau^{n-1}\|^2}{2\tau_n}.
  \label{eq:continuous_estimator_En_physical}
 \end{equation}
\end{theorem}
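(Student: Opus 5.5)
We follow the abstract a posteriori framework of Nochetto--Savaré--Verdi, specialized to the subgradient flow \eqref{eq:continuous_gradient_flow}. First, we rewrite $E_n$ in physical variables. Since $F^n-\delta_nU_\tau^n=((f^n-\kappa\delta_nu_\tau^n)/\kappa,-\delta_nw_\tau^n)$, the weighted product \eqref{eq:continuous_state_product} gives
\[
\langle F^n-\delta_nU_\tau^n,\delta_nU_\tau^n\rangle_{\Hmu}=(f^n-\kappa\delta_nu_\tau^n,\delta_nu_\tau^n)_H-\|\delta_nw_\tau^n\|_{\Mmu}^2 .
\]
Moreover, $U_\tau^n\in\mathcal C$ for all $n$ ($n=0$ by assumption, $n\ge1$ by the resolvent), so $\Psi(U_\tau^n)=\frac12\|\nabla u_\tau^n\|^2$. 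This yields \eqref{eq:continuous_estimator_En_physical}. It also shows that the left-hand side of \eqref{eq:continuous_aposteriori_En} is bounded by $\mathfrak E$, by the inequality displayed before the theorem, since $\kappa\ge1$. It therefore suffices to prove the abstract bound $\mathfrak E\le\|U(0)-U_\tau^0\|_{\Hmu}+(\sum\tau_n^2E_n)^{1/2}+\|F-\overline F\|_{L^1(0,T;\Hmu)}$, together with $\|F-\overline F\|_{\Hmu}=\kappa^{-1/2}\|f-\overline f\|$.

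For the abstract bound, let $e=U-U_\tau$. On $I_n$ we have $\partial_tU_\tau=\delta_nU_\tau^n$ and $\overline\Xi:=\overline F-\partial_tU_\tau\in\partial\Psi(\overline U_\tau)$. With $\Xi=F-U_t\in\partial\Psi(U)$, we get
\[
\frac12\frac{d}{dt}\|e\|_{\Hmu}^2=\langle F-\overline F,e\rangle+\langle \overline\Xi-\Xi,U-U_\tau\rangle .
\]
We split $\langle\overline\Xi-\Xi,U-U_\tau\rangle=\langle\overline\Xi,U-U_\tau\rangle+\langle\Xi,U_\tau-U\rangle$. Corollary~\ref{cor:continuous_strong_remainder} bounds $\langle\Xi,U_\tau-U\rangle\le\Psi(U_\tau)-\Psi(U)-\sigma(U;U_\tau)$. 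For the other term, $\langle\overline\Xi,U-\overline U_\tau\rangle\le\Psi(U)-\Psi(\overline U_\tau)-\sigma(\overline U_\tau;U)$, which leaves the residual $\langle\overline\Xi,\overline U_\tau-U_\tau\rangle$. Summing,
\[
\frac12\frac{d}{dt}\|e\|^2+\sigma(U;U_\tau)+\sigma(\overline U_\tau;U)\le\langle F-\overline F,e\rangle+\mathcal R(t),
\]
where $\mathcal R=\Psi(U_\tau)-\Psi(\overline U_\tau)+\langle\overline\Xi,\overline U_\tau-U_\tau\rangle$.

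The key step is to show $\int_{I_n}\mathcal R\,dt\le\tau_n^2E_n/2$. Since $\overline U_\tau-U_\tau=(t_n-t)\delta_nU_\tau^n$, the inner-product part integrates to $\frac{\tau_n^2}{2}\langle\overline\Xi^n,\delta_nU_\tau^n\rangle$. Convexity of $\Psi$ along the segment gives $\Psi(U_\tau(t))\le\frac{t_n-t}{\tau_n}\Psi(U_\tau^{n-1})+\frac{t-t_{n-1}}{\tau_n}\Psi(U_\tau^n)$, so $\int_{I_n}(\Psi(U_\tau)-\Psi(U_\tau^n))\le\frac{\tau_n}{2}(\Psi(U_\tau^{n-1})-\Psi(U_\tau^n))$. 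Adding the two contributions yields exactly $\frac{\tau_n^2}{2}E_n$. Here $E_n\ge0$ by \eqref{eq:continuous_En_nonnegative}.

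Integrating over $(0,t)$ then gives
\[
\frac12\|e(t)\|^2+\frac12\mathfrak E_\sigma(t)^2\le\frac12\|e(0)\|^2+\frac12\sum\tau_n^2E_n+\int_0^t\|F-\overline F\|\,\|e\|\,ds .
\]
A standard Gronwall-type lemma applies: if $y^2\le A^2+2\int b\,y$ with $y$ dominating both error quantities, then $y\le A+\int b$. Taking $y=\max_{[0,t]}\|e\|$ and combining with the $\mathfrak E_\sigma$ part gives $\mathfrak E\le(\|e(0)\|^2+\sum\tau_n^2E_n)^{1/2}+\|F-\overline F\|_{L^1}$, which implies the claimed sum bound.

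The main obstacle is bookkeeping rather than depth. Regularity is not an issue: $\Psi(U)$ is absolutely continuous and both $U$ and $U_\tau$ stay in $D(\Psi)$, so the differential inequality holds almost everywhere. The constant in the final Gronwall step has to be tracked carefully so that the factor matches the statement exactly.
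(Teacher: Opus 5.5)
Your proposal is correct and follows the paper's route. The paper simply applies the abstract a posteriori theorem of Nochetto--Savar\'e--Verdi (Theorem~3.2 and Remark~3.3) in $\Hmu$, using exactly your ingredients: the weighted Hilbert structure, the convex l.s.c.\ $\Psi$, the discrete inclusion, the coercive remainder of Corollary~\ref{cor:continuous_strong_remainder}, and $\|F-\overline F\|_{\Hmu}=\kappa^{-1/2}\|f-\overline f\|$; you have unpacked the proof of that theorem. One detail is worth stating explicitly: for $t$ interior to some $I_m$, the integration over $(0,t)$ is covered by the pointwise bound $0\le\mathcal R(s)\le(t_m-s)E_m$, which follows from the subgradient inequality at $U_\tau^m$ and convexity along the segment.
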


\begin{proof}
Apply \cite[Theorem~3.2 and Remark~3.3]{nochetto_posteriori_2000} in $\Hmu$. The Hilbert structure, proper convex lower semicontinuous energy, discrete inclusion, and strengthened subgradient inequality are respectively \eqref{eq:continuous_state_product}, Proposition~\ref{prop:continuous_convex_energy}, \eqref{eq:continuous_abstract_euler}, and Corollary~\ref{cor:continuous_strong_remainder}. Finally,
\(
 \|F-\overline F\|_{L^1(0,T;\Hmu)}
 =\kappa^{-1/2}
  \|f-\overline f\|_{L^1(0,T;H)}
  \).
\end{proof}

Assume now that $U_\tau^0=U^0$ is the exact initial state, $\Delta u^0\in H$, and $f$ has a value $f^0\in H$. Then \eqref{eq:continuous_operator_domain} gives $U^0\in D(\partial\Psi)$. Set $F^0=(f^0/\kappa,0)$ and for $1\leq n\leq N$,
\[
 \delta_nF^n=\frac{F^n-F^{n-1}}{\tau_n},
 \qquad
 \delta_n^2U_\tau^n
 =\frac{\delta_nU_\tau^n-\delta_{n-1}U_\tau^{n-1}}{\tau_n},
\]
and define
\begin{equation}
 D_n:=\tau_n
 \langle\delta_nF^n-\delta_n^2U_\tau^n,
 \delta_nU_\tau^n\rangle_{\Hmu},
 \label{eq:continuous_estimator_Dn}
\end{equation}
where $n=1$ chooses $ U_\tau^{-1}$, for example, such that
\begin{equation}
  \delta_0U_\tau^0 = \left(\frac{f^0+\Delta u^0}{\kappa},0\right) \in 
  F^0-\partial\Psi(U^0).
  \label{eq:initial_difference_quotient}
\end{equation}
Monotonicity of $\partial\Psi$ and the convex-potential comparison give
\begin{equation}
 0\leq E_n\leq D_n.
 \label{eq:continuous_En_le_Dn}
\end{equation}

\begin{corollary}[Dissipation-based a posteriori estimate]
\label{cor:continuous_aposteriori_Dn}
Under the assumptions of Theorem~\ref{thm:continuous_aposteriori}, let $U_\tau^0=U^0$ be the exact initial state, assume $\Delta u^0\in H$, and let $f^0\in H$ be specified. Then
\begin{align}
 &\max\Bigg\{
 \max_{t\in[0,T]}\left(\|u(t)-u_\tau(t)\|^2
 +\|w(t)-w_\tau(t)\|_{\Mmu}^2\right)^{1/2},
 \left(\|u-u_\tau\|_{L^2(0,T;V)}^2
 +\|u-\overline u_\tau\|_{L^2(0,T;V)}^2\right)^{1/2}\Bigg\}\notag\\
 &\quad\leq
 \left(\sum_{n=1}^N\tau_n^2D_n\right)^{1/2}
 +\frac1{\sqrt\kappa}
 \|f-\overline f\|_{L^1(0,T;H)},
 \label{eq:continuous_aposteriori_Dn}
\end{align}
where 
\begin{equation}
  D_n=\tau_n\left[
  (\delta_nf^n-\kappa\delta_n^2u_\tau^n,\delta_nu_\tau^n)_H
  -\langle\delta_n^2w_\tau^n,\delta_nw_\tau^n\rangle_{\Mmu}\right].
  \label{eq:continuous_estimator_Dn_physical}
 \end{equation}
\end{corollary}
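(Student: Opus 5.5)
The estimate \eqref{eq:continuous_aposteriori_Dn} follows from Theorem~\ref{thm:continuous_aposteriori} once the estimator $E_n$ is bounded above by $D_n$, which is \eqref{eq:continuous_En_le_Dn}. Since $U_\tau^0=U^0=U(0)$, the initial-error term in \eqref{eq:continuous_aposteriori_En} vanishes. Replacing $\tau_n^2E_n$ by $\tau_n^2D_n$ then gives \eqref{eq:continuous_aposteriori_Dn} directly. The substance is therefore to justify $0\le E_n\le D_n$ and to rewrite $D_n$ in physical variables.

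The lower bound is \eqref{eq:continuous_En_nonnegative}. For the upper bound I set $\Xi^n:=F^n-\delta_nU_\tau^n\in\partial\Psi(U_\tau^n)$ for $n\ge1$. For $n=0$ I set $\Xi^0:=F^0-\delta_0U_\tau^0$, which lies in $\partial\Psi(U^0)$ by the choice \eqref{eq:initial_difference_quotient}. To check this, take $\eta=0$ in Proposition~\ref{prop:continuous_subdifferential}; this is admissible because $0\in N_{K_r}(q^0(r))$. Then $(-\Delta u^0/\kappa,0)\in\partial\Psi(U^0)$, and $F^0-\delta_0U_\tau^0=(-\Delta u^0/\kappa,0)$.

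The subgradient inequality at $U_\tau^{n-1}$, tested with $U_\tau^n$, gives
\[
\Psi(U_\tau^n)-\Psi(U_\tau^{n-1})\ge\langle\Xi^{n-1},U_\tau^n-U_\tau^{n-1}\rangle_{\Hmu}.
\]
Dividing by $\tau_n$ and substituting into \eqref{eq:continuous_estimator_En} yields
\[
E_n\le\langle\Xi^n-\Xi^{n-1},\delta_nU_\tau^n\rangle_{\Hmu}
=\tau_n\langle\delta_nF^n-\delta_n^2U_\tau^n,\delta_nU_\tau^n\rangle_{\Hmu}=D_n .
\]
For $n=1$, $\delta_0U_\tau^0$ denotes the prescribed quantity in \eqref{eq:initial_difference_quotient}, so $\delta_1^2U_\tau^1$ is well defined. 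The physical form \eqref{eq:continuous_estimator_Dn_physical} follows by expanding the weighted inner product \eqref{eq:continuous_state_product} with $F^n=(f^n/\kappa,0)$: the $\kappa$ cancels in the first component, and the second component contributes $-\langle\delta_n^2w_\tau^n,\delta_nw_\tau^n\rangle_{\Mmu}$.

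The main point needing care is the $n=1$ step. Here $\Xi^0$ must be a genuine element of $\partial\Psi(U^0)$, which requires $\Delta u^0\in H$ so that $U^0\in D(\partial\Psi)$, and the artificial $U_\tau^{-1}$ must be consistent with it. Apart from this, the argument is a direct substitution into Theorem~\ref{thm:continuous_aposteriori}. Alternatively, the corollary can be cited from \cite[Theorem~3.2/Remark~3.3 and Section~4]{nochetto_posteriori_2000}, whose hypotheses were verified in the proof of that theorem.
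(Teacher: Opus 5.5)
Your proposal is correct and takes essentially the same route as the paper. The paper's proof cites the abstract results to get the comparison $0\le E_n\le D_n$ in \eqref{eq:continuous_En_le_Dn} and inserts it into Theorem~\ref{thm:continuous_aposteriori} with zero initial error; you instead write that comparison out, using the subgradient inequality at $U_\tau^{n-1}$ and checking via the zero normal selection that $\Xi^0=(-\Delta u^0/\kappa,0)\in\partial\Psi(U^0)$.
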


\begin{proof}
 Use \cite[Lemma~3.11, Corollary~3.12, and Theorem~3.18]{nochetto_posteriori_2000}. No ratio of consecutive time steps occurs.
\end{proof}

\subsection{A priori convergence rates}


For a real Hilbert space $X$, we write $BV_r(0,T;X)$ for the right-continuous representatives of $X$-valued functions of bounded variation, with the value at $T$ fixed by the left trace. This is the $X$-valued version of the convention in \cite[Section~XII.7]{visintinDifferentialModelsHysteresis1994}. We denote the total variation of  a function $g\colon [0,T] \rightarrow X$ by $\operatorname{Var}(g;[0,T])$. 

\begin{corollary}[Half-order temporal error under energy data]
\label{cor:continuous_half_order}
Suppose Assumption~\ref{ass:continuous_model} holds. Let $U=(u,w)$ be the exact solution and let $U_\tau^n=(u_\tau^n,w_\tau^n)$, $n=0,\ldots,N$, be the implicit Euler solution initialized by $U_\tau^0=U^0$. Choose $f^n = \frac{1}{\tau_n}\int_{t_{n-1}}^{t_n} f(s)\, \mathrm{d} s$. Then
\begin{align}
 &\max\Bigg\{
 \max_{t\in[0,T]}\left(\|u(t)-u_\tau(t)\|^2
 +\|w(t)-w_\tau(t)\|_{\Mmu}^2\right)^{1/2},
 \left(\|u-u_\tau\|_{L^2(0,T;V)}^2
 +\|u-\overline u_\tau\|_{L^2(0,T;V)}^2\right)^{1/2}\Bigg\}\notag\\
 &\quad\leq\left[
 3\tau\left(
 \frac12\|\nabla u^0\|^2+
 \frac1\kappa\|f\|_{L^2(0,T;H)}^2\right)
 \right]^{1/2}.
 \label{eq:continuous_half_order}
\end{align}
\end{corollary}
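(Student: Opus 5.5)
We start from the a posteriori bound of Theorem~\ref{thm:continuous_aposteriori} with $U_\tau^0=U^0$, so the initial term vanishes. It then remains to bound $\sum_n\tau_n^2E_n$ and the load term $\kappa^{-1/2}\|f-\overline f\|_{L^1(0,T;H)}$ by quantities of order $\tau$ involving only $\|\nabla u^0\|$ and $\|f\|_{L^2(0,T;H)}$.

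For the estimator, we use $\tau_n^2E_n\le\tau\,\tau_nE_n$, which is valid because $E_n\ge0$ by \eqref{eq:continuous_En_nonnegative}. Writing $\tau_nE_n$ from \eqref{eq:continuous_estimator_En}, we get
\[
\tau_nE_n=\tau_n\langle F^n,\delta_nU_\tau^n\rangle_{\Hmu}-\tau_n\|\delta_nU_\tau^n\|_{\Hmu}^2-\Psi(U_\tau^n)+\Psi(U_\tau^{n-1}).
\]
Young's inequality gives $\tau_n\langle F^n,\delta_nU_\tau^n\rangle\le\tfrac12\tau_n\|F^n\|_{\Hmu}^2+\tfrac12\tau_n\|\delta_nU_\tau^n\|_{\Hmu}^2$, so the $\|\delta_nU_\tau^n\|^2$ terms can be discarded. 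Summing over $n$ telescopes the energy. Since $\Psi\ge0$ and $\Psi(U^0)=\tfrac12\|\nabla u^0\|^2$ (because $U^0\in\mathcal C$), we obtain
\[
\sum_n\tau_nE_n\le\tfrac12\|\nabla u^0\|^2+\tfrac12\sum_n\tau_n\|F^n\|_{\Hmu}^2 .
\]
Here $\|F^n\|_{\Hmu}^2=\kappa\|f^n/\kappa\|^2=\|f^n\|^2/\kappa$. Because $f^n$ is the local average, Jensen's inequality gives $\sum_n\tau_n\|f^n\|^2\le\|f\|_{L^2(0,T;H)}^2$. Hence
\[
\sum_n\tau_n^2E_n\le\tau\Bigl(\tfrac12\|\nabla u^0\|^2+\tfrac1{2\kappa}\|f\|_{L^2}^2\Bigr).
\]

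For the load term, the $L^1$ norm is weak, and a rate is available only through the averaging. A direct bound gives $\|f-\overline f\|_{L^1(I_n;H)}\le\tau_n^{1/2}\|f-f^n\|_{L^2(I_n;H)}\le\tau_n^{1/2}\|f\|_{L^2(I_n;H)}$, since $\overline f$ is the $L^2$-orthogonal projection onto piecewise constants. Summing with Cauchy--Schwarz would give $T^{1/2}\|f\|_{L^2}$ rather than $\tau^{1/2}$, so this crude route fails. This is the main obstacle.

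To overcome it, we would instead insert the load error into the proof of \cite{nochetto_posteriori_2000} rather than use its $L^1$ form. In that error identity, the load enters as $\int_0^T\langle F-\overline F,U-U_\tau\rangle\,dt$. We exploit orthogonality: $F-\overline F$ has zero mean on each $I_n$, so the pairing equals $\int\langle F-\overline F,(U-U_\tau)-\Pi_0(U-U_\tau)\rangle\,dt$. We then bound this by $\|F-\overline F\|_{L^2}\,\tau\,\|\partial_t(U-U_\tau)\|_{L^2}$. The factor $\|\partial_tU\|_{L^2}$ is controlled by the standard energy estimate $\|U_t\|_{L^2(\Hmu)}^2\le\Psi(U^0)+\|F\|^2_{L^2}$ obtained by testing with $U_t$, and the discrete analogue follows from the discarded $\|\delta_nU_\tau^n\|^2$ terms above. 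This produces a further contribution of size $\tau(\ldots)$ with the same data. Collecting the three contributions, each bounded by $\tau(\tfrac12\|\nabla u^0\|^2+\kappa^{-1}\|f\|^2_{L^2})$, and using $(a+b+c)\le\sqrt{3(a^2+b^2+c^2)}$-type bookkeeping, yields the stated constant $3\tau(\cdots)$. Checking that the constants match exactly is the delicate part, and we would adjust the Young weights to land on the factor $3$.
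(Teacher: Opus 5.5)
Your bound on the estimator is correct. From $E_n\ge0$, Young's inequality, telescoping, $\Psi\ge0$ and Jensen you get $\sum_n\tau_n^2E_n\le\tau\bigl(\tfrac12\|\nabla u^0\|^2+\tfrac1{2\kappa}\|f\|_{L^2(0,T;H)}^2\bigr)$. You are also right that Theorem~\ref{thm:continuous_aposteriori} cannot be used as a black box, since $\kappa^{-1/2}\|f-\overline f\|_{L^1(0,T;H)}$ carries no rate for $f\in L^2(0,T;H)$. The gap is the replacement for the load term: it is only sketched and does not yield \eqref{eq:continuous_half_order}.

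First, the zero mean of $F-\overline F$ on each $I_n$ cancels only integrals over whole subintervals. That controls the error at the nodes and $\mathfrak E_\sigma$, but $\mathfrak E_H$ is a maximum over all $t\in[0,T]$. For $t$ inside $I_m$, the partial integral $\int_{t_{m-1}}^t\langle F-F^m,U-U_\tau\rangle_{\Hmu}\,ds$ has no cancellation. Handling it (e.g.\ via the primitive $\int_{t_{m-1}}^t(F-F^m)\,ds$, an integration by parts, and absorption of the boundary term into the left-hand side) costs a constant factor you have not accounted for.

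Second, the constants are never tracked. Set $X:=\tfrac12\|\nabla u^0\|^2+\kappa^{-1}\|f\|_{L^2(0,T;H)}^2$.
\begin{itemize}
\item The energy bound you quote, $\|U_t\|_{L^2(0,T;\Hmu)}^2\le\Psi(U^0)+\|F\|_{L^2(0,T;\Hmu)}^2$, is not what testing with $U_t$ gives. That test gives $2\Psi(U^0)+\|F\|_{L^2(0,T;\Hmu)}^2$, and the factor $2$ cannot be dropped in general. It fails already for $K_r=H$, where the problem is a linear heat equation.
\item Your load contribution is a product $\tau\|F\|_{L^2(0,T;\Hmu)}\bigl(\|U_t\|_{L^2(0,T;\Hmu)}+\|\partial_tU_\tau\|_{L^2(0,T;\Hmu)}\bigr)$. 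It becomes a term of the form $\tau(\cdots)$ only after a further Young step.
\item The bookkeeping is inconsistent. If three pieces of size $\sqrt{\tau X}$ are added in a bound for $\mathfrak E$, then $(a+b+c)\le\sqrt{3(a^2+b^2+c^2)}$ gives $\sqrt{9\tau X}$, not $\sqrt{3\tau X}$. If instead they are added in a bound for $\mathfrak E^2$, you must show each is at most $\tau X$, which you have not done for the load piece.
\end{itemize}
``Adjusting the Young weights to land on $3$'' is exactly the unproved step. At best your route gives $\mathfrak E\le C\sqrt{\tau X}$ with an unspecified $C$.

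For comparison, the paper does not go through Theorem~\ref{thm:continuous_aposteriori} at all. It applies the abstract energy-data estimate \cite[Theorem~3.16]{nochetto_posteriori_2000} for averaged data and $U^0\in D(\Psi)$, which yields $\mathfrak E\le[3\tau(\Psi(U^0)+\|F\|_{L^2(0,T;\Hmu)}^2)]^{1/2}$. The refined treatment of the data error and the constant $3$ come from that theorem. Its hypotheses are verified in Section~\ref{sec:continuous_pi_structure}: the Hilbert space $\Hmu$, a proper convex lower semicontinuous $\Psi$, and the discrete inclusion \eqref{eq:continuous_abstract_euler}. The only problem-specific inputs are $\Psi(U^0)=\tfrac12\|\nabla u^0\|^2$ and $\|F\|_{L^2(0,T;\Hmu)}^2=\kappa^{-1}\|f\|_{L^2(0,T;H)}^2$. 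To turn your route into a proof, you would have to reproduce that argument with explicit constants, including the interior times; otherwise, invoke the cited result.
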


\begin{proof}
Apply \cite[Theorem~3.16]{nochetto_posteriori_2000}, using \(\Psi(U^0)=\frac12\|\nabla u^0\|^2\) and \(\|F\|_{L^2(0,T;\Hmu)}^2=\frac1\kappa\|f\|_{L^2(0,T;H)}^2\).
\end{proof}


\begin{theorem}[First-order variable-step error for BV data]
\label{thm:continuous_first_order}
Under Assumption~\ref{ass:continuous_model}, let $U=(u,w)$ be the exact solution and let $U_\tau^n=(u_\tau^n,w_\tau^n)$, $n=0,\ldots,N$, be the implicit Euler solution initialized by $U_\tau^0=U^0$. Assume additionally that
\[
 \Delta u^0\in H,\qquad f\in BV_r(0,T;H).
\]
Sample $f^0=f(0)$ and $f^n=f(t_n)$, using the left trace at $T$. Then
\begin{align}
 &\max\Bigg\{
 \max_{t\in[0,T]}\left(\|u(t)-u_\tau(t)\|^2
 +\|w(t)-w_\tau(t)\|_{\Mmu}^2\right)^{1/2},
 \left(\|u-u_\tau\|_{L^2(0,T;V)}^2
 +\|u-\overline u_\tau\|_{L^2(0,T;V)}^2\right)^{1/2}\Bigg\}\notag\\
 &\quad\leq\frac{\tau}{\sqrt\kappa}
 \left[
 \frac1{\sqrt2}\|f(0)+\Delta u^0\|+
 2\operatorname{Var}(f;[0,T])
 \right].
 \label{eq:continuous_first_order}
\end{align}
\end{theorem}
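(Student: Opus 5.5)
The plan is to start from the dissipation-based a posteriori bound of Corollary~\ref{cor:continuous_aposteriori_Dn}, which applies because $U_\tau^0=U^0$ and $\Delta u^0\in H$ place $U^0$ in $D(\partial\Psi)$. With $f^0=f(0)$, the choice \eqref{eq:initial_difference_quotient} of $\delta_0U_\tau^0$ is admissible. It then suffices to bound $\bigl(\sum_n\tau_n^2D_n\bigr)^{1/2}$ and $\kappa^{-1/2}\|f-\overline f\|_{L^1(0,T;H)}$ separately by $\tau$ times data.

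\emph{Forcing consistency.} On $I_n$ we have $\overline f=f(t_n)$, so
\[
\|f(t)-f(t_n)\|\le\operatorname{Var}(f;(t_{n-1},t_n]).
\]
Integrating over $I_n$ and summing gives $\|f-\overline f\|_{L^1}\le\tau\operatorname{Var}(f;[0,T])$. The left-trace convention at $T$ and right-continuity ensure that jumps are counted once.

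\emph{Discrete dissipation.} Set $e_n:=\|\delta_nU_\tau^n\|_{\Hmu}$ for $n\ge0$, with $\delta_0U_\tau^0$ as in \eqref{eq:initial_difference_quotient}. Monotonicity of $\partial\Psi$ is applied to $\Xi^n=F^n-\delta_nU_\tau^n$ and $\Xi^{n-1}=F^{n-1}-\delta_{n-1}U_\tau^{n-1}$; for $n=1$, $\Xi^0\in\partial\Psi(U^0)$ holds by construction. Since $U_\tau^n-U_\tau^{n-1}=\tau_n\delta_nU_\tau^n$, this gives $D_n\ge0$ and
\[
\langle\delta_nU_\tau^n-\delta_{n-1}U_\tau^{n-1},\delta_nU_\tau^n\rangle_{\Hmu}\le\tau_n\langle\delta_nF^n,\delta_nU_\tau^n\rangle_{\Hmu}.
\]
The identity $2\langle a-b,a\rangle=\|a\|^2-\|b\|^2+\|a-b\|^2$ then yields
\[
e_n^2-e_{n-1}^2+\|\delta_nU_\tau^n-\delta_{n-1}U_\tau^{n-1}\|^2\le 2\|F^n-F^{n-1}\|\,e_n,
\]
and hence the stability bound
\[
e_n\le e_0+\sum_{k\le n}\|F^k-F^{k-1}\|\le e_0+\kappa^{-1/2}\operatorname{Var}(f;[0,T]).
\]
Here $e_0=\kappa^{-1/2}\|f(0)+\Delta u^0\|$, since $\|(g/\kappa,0)\|_{\Hmu}=\kappa^{-1/2}\|g\|$.

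\emph{Summing the estimator.} Here I use $\tau_n^2D_n\le\tau\,\tau_nD_n$ and
\[
\tau_nD_n=\tau_n^2\langle\delta_nF^n,\delta_nU_\tau^n\rangle-\tfrac{\tau_n}{2}\bigl(e_n^2-e_{n-1}^2+\|\delta_nU_\tau^n-\delta_{n-1}U_\tau^{n-1}\|^2\bigr)\cdot\tau_n/\tau_n .
\]
Written carefully, this is $D_n=\langle F^n-F^{n-1},e\rangle$-type terms minus a telescoping difference, so that
\[
\sum_n\tau_nD_n\le\tau\Bigl[\tfrac12e_0^2+\sum_n\|F^n-F^{n-1}\|e_n\Bigr].
\]
The aim is to reach the shape $\sum_n\tau_n^2D_n\le\tau^2\bigl(\tfrac12e_0^2+\sum_n\|F^n-F^{n-1}\|e_n\bigr)$. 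For this I would weight each step by $\tau_n^2\le\tau^2$ and work with $\tau_n^2D_n\le\tau^2 D_n/\tau_n\cdots$. This is the main obstacle: handling variable steps so that no step ratios appear. My approach is to apply $\tau_n^2\le\tau\tau_n$ and $D_n\le\tfrac12(e_{n-1}^2-e_n^2)+\|F^n-F^{n-1}\|e_n$, which follows from the identity above, since $D_n=\tau_n\langle\delta_nF^n-\delta_n^2U,\delta_nU\rangle$. Then
\[
\sum_n\tau_n^2D_n\le\tau^2\Bigl[\tfrac12e_0^2+\sum_n\|F^n-F^{n-1}\|e_n\Bigr]\le\tau^2\Bigl[\tfrac12e_0^2+V(e_0+V)\Bigr],
\]
where $V=\kappa^{-1/2}\operatorname{Var}(f)$. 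Finally, $\tfrac12e_0^2+Ve_0+V^2\le(e_0/\sqrt2+V)^2$. Adding the forcing consistency bound $\tau V$ gives
\[
\tau\Bigl(\tfrac{e_0}{\sqrt2}+2V\Bigr)=\frac{\tau}{\sqrt\kappa}\Bigl[\tfrac1{\sqrt2}\|f(0)+\Delta u^0\|+2\operatorname{Var}(f;[0,T])\Bigr],
\]
which is \eqref{eq:continuous_first_order}.
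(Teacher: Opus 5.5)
Your proposal is correct and follows the paper's proof. The paper combines Corollary~\ref{cor:continuous_aposteriori_Dn} with two ingredients: the dissipation bound $(\sum_n D_n)^{1/2}\le\frac{1}{\sqrt2}\|\delta_0U_\tau^0\|_{\Hmu}+\operatorname{Var}(\overline F;[0,T])$, which it cites from Nochetto--Savar\'e--Verdi (Theorem~3.18) and which you rederive by hand from monotonicity, and the consistency estimate $\|f-\overline f\|_{L^1(0,T;H)}\le\tau\operatorname{Var}(f;[0,T])$.

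The variable-step ``obstacle'' you describe is not real, and your garbled middle display should be dropped. Since you have already shown $D_n\ge0$, use $\tau_n^2D_n\le\tau^2D_n$ and then telescope the unweighted sum $\sum_nD_n\le\frac12e_0^2+\sum_n\|F^n-F^{n-1}\|_{\Hmu}e_n$. Telescoping a weighted sum $\sum_n\tau_n(e_{n-1}^2-e_n^2)$ directly would not be justified.
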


\begin{proof}
Since $U^0\in D(\partial\Psi)$ by \eqref{eq:continuous_operator_domain} and the datum is sampled in its right-continuous BV representative, \cite[Theorem~3.18]{nochetto_posteriori_2000} gives
\begin{equation}
  \left(\sum_{n=1}^ND_n\right)^{1/2}
  \leq\frac1{\sqrt2}\|\delta_0U_\tau^0\|_{\Hmu}
  +\operatorname{Var}(\overline F;[0,T]).
  \label{eq:continuous_Dn_BV_control}
\end{equation}
The estimates follows by 
\(
 \operatorname{Var}(\bar F;[0,T])
 \leq \kappa^{-1/2}\operatorname{Var}(f;[0,T])
\), \(\|f-\overline f\|_{L^1(0,T;H)} \leq \tau\operatorname{Var}(f;[0,T]) \) and from \eqref{eq:initial_difference_quotient},
\[
  \|\delta_0U_\tau^0\|_{\Hmu}
  \leq\frac1{\sqrt\kappa}\|f^0+\Delta u^0\|.
  \]
\end{proof}

\begin{remark}[Constitutive-output error]
\label{rem:continuous_weighted_control}
For $t\in[0,T]$, define the reconstructed discrete constitutive output by
$y_\tau:=a u_\tau+\Aggop w_\tau$. Then
\[
 \|y-y_\tau\|
 \leq a\|u-u_\tau\|+S_c^{1/2}\|w-w_\tau\|_{\Mmu}.
\]
Hence the preceding estimates also control the PI output.
\end{remark}

\section{Spatial and fully discrete error estimates}
\label{sec:continuous_pi_space}

  We now turn to the spatial approximation and its combination with the temporal discretization. The key point is that only the diffusive field $u$ is approximated in a finite element space, while the hysteresis variables remain in their natural memory space. The spatial analysis is therefore driven by the approximation properties of the diffusive field, whose \(H^2\)-regularity follows from the natural regularity established above and standard elliptic regularity.

Throughout this section, $\Omega$ is a bounded convex polygonal or polyhedral domain. The global Dirichlet regularity theorem of \citet{grisvard_elliptic_1985} gives a constant $C_{\rm ell}>0$, depending only on $\Omega$, such that
\begin{equation}
 \|v\|_{H^2(\Omega)}
 \leq C_{\rm ell}\|\Delta v\|
 \quad\text{for }v\in V,\ \Delta v\in H.
 \label{eq:continuous_elliptic_regularity}
\end{equation}
Proposition~\ref{prop:continuous_wellposedness} already gives $\Delta u\in L^2(0,T;H)$ directly from the natural time regularity and the physical balance. Therefore \eqref{eq:continuous_elliptic_regularity} immediately yields
\begin{equation}
 \|u\|_{L^2(0,T;H^2(\Omega))}
 \leq C_{\rm ell}\|\Delta u\|_{L^2(0,T;L^2(\Omega))}.
 \label{eq:continuous_natural_H2}
\end{equation}

Let $(V_h)_{0<h\leq h_0}$ be a family of finite-dimensional conforming subspaces of $V$, and let $R_h:V\to V_h$ be the Ritz projection:
\begin{equation}
 (\nabla R_hv,\nabla z_h)_H=(\nabla v,\nabla z_h)_H
 \qquad\forall z_h\in V_h.
 \label{eq:continuous_Ritz_projection}
\end{equation}
Assume that there is a constant $C_{\rm app}>0$, independent of $h$ and $v$, such that
\begin{align}
 &\|v-R_hv\|+h\|\nabla(v-R_hv)\|
 \leq C_{\rm app}h^2\|v\|_{H^2(\Omega)},
 \label{eq:continuous_Ritz_H2}\\
 &\|v-R_hv\|
 \leq C_{\rm app}h\|v\|_V.
 \label{eq:continuous_Ritz_energy}
\end{align}
The first estimate holds for every $v\in V\cap H^2(\Omega)$, and the second for every $v\in V$. $C_{\rm app}$ may depend on $\Omega$ and on the chosen family $(V_h)$.

\subsection{Restricted subgradient flow}


Define the spatially discretized hysteresis constraint set
\begin{equation}
 \mathcal C_h
 :=\mathcal C\cap(V_h\times\Mmu)
 =\{(u_h,w)\in V_h\times\Mmu:\Gapop(u_h,w)\in\mathcal K\}
 \subset\Hmu.
 \label{eq:continuous_discrete_constraint}
\end{equation}
and the derived energy
\begin{equation}
 \Psi_h(u,w):=
 \begin{cases}
  \dfrac12\|\nabla u\|^2,&(u,w)\in\mathcal C_h,\\
  +\infty,&(u,w)\notin\mathcal C_h.
 \end{cases}
 \label{eq:continuous_discrete_energy}
\end{equation}
The set $\mathcal C_h$ is nonempty and convex due to $(0,0)\in\mathcal C_h$ and intersection of convex sets. Closedness follows by
the finite dimensionality of $V_h$ and continuity of $\Gapop$. 
If $\Pi_h^0$ is the $H$-orthogonal projection onto $V_h$, then $\Pi_h^0:H\to V_h$ is continuous
because $V_h$ is finite dimensional. Hence
$(u,w)\mapsto\frac12\|\nabla\Pi_h^0u\|^2$ is a finite continuous convex
functional on $\Hmu$. Since $\Pi_h^0u=u$ for $u\in V_h$, the definition
\eqref{eq:continuous_discrete_energy} is equivalently
\[
 \Psi_h(u,w)
 =\frac12\|\nabla\Pi_h^0u\|^2+I_{\mathcal C_h}(u,w).
\]
Thus $\Psi_h$ is proper, convex, and lower semicontinuous on $\Hmu$.
For every $\Xi_h\in\partial\Psi_h(U_h)$ and every $R_h^*=(r_h^*,z^*)\in\mathcal C_h$, the quadratic expansion and the normal-cone inequality give
\begin{equation}
 \Psi_h(R_h^*)-\Psi_h(U_h)
 -\langle\Xi_h,R_h^*-U_h\rangle_{\Hmu}
 \geq\frac12\|\nabla(r_h^*-u_h)\|^2.
 \label{eq:continuous_discrete_strong_remainder}
\end{equation}

Given an initial state $U_h^0=(u_h^0,w_h^0)\in\mathcal C_h$, the spatially
semidiscrete problem is to seek $U_h=(u_h,w_h)$ such that
\begin{equation}
 U_{h,t}+\partial\Psi_h(U_h)\ni F\quad\text{in }\Hmu
 \quad\text{a.e. in }(0,T),
 \qquad U_h(0)=U_h^0.
 \label{eq:continuous_semidiscrete_gradient}
\end{equation}
Note $w_h$ is not restricted to any finite-dimensional space and its subscript $h$ only
records the coupling to the semidiscrete field $u_h$.

\begin{proposition}[Semidiscrete well-posedness and regularity]
\label{prop:continuous_semidiscrete_wellposedness}
Let Assumption~\ref{ass:continuous_model} hold and let
$U_h^0\in\mathcal C_h$. The initial-value problem
\eqref{eq:continuous_semidiscrete_gradient} has a unique solution satisfying
\begin{equation}
 U_h\in H^1(0,T;\Hmu)\cap C([0,T];\Hmu),
 \qquad
 \Psi_h(U_h)\in W^{1,1}(0,T).
 \label{eq:continuous_semidiscrete_regularity}
\end{equation}
Moreover,
\begin{equation}
 F-U_{h,t}\in L^2(0,T;\Hmu),
 \qquad
 F(t)-U_{h,t}(t)\in\partial\Psi_h(U_h(t))
 \quad\text{for a.e. }t\in(0,T).
 \label{eq:continuous_semidiscrete_subgradient_regularity}
\end{equation}
In addition, for every $t\in[0,T]$,
\begin{equation}
 \frac12\int_0^t\|U_{h,t}\|_{\Hmu}^2\,ds+
 \Psi_h(U_h(t))
 \leq\Psi_h(U_h^0)
 +\frac1{2\kappa}\|f\|_{L^2(0,t;H)}^2.
 \label{eq:continuous_semidiscrete_stability}
\end{equation}
\end{proposition}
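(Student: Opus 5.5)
The plan mirrors the proof of Corollary~\ref{prop:continuous_wellposedness}, replacing $\Psi$ by $\Psi_h$. We already know that $\Psi_h$ is proper, convex and lower semicontinuous on $\Hmu$. Hence $\partial\Psi_h$ is maximal monotone, and we may invoke \cite[Theorems~4.2 and~4.11]{barbu_nonlinear_2010} in the Hilbert space $\Hmu$.

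The only hypothesis to verify is that the initial state lies in the effective domain. Since $U_h^0\in\mathcal C_h$, we have $u_h^0\in V_h\subset V$ and $\Gapop U_h^0\in\mathcal K$. Therefore $\Psi_h(U_h^0)=\frac12\|\nabla u_h^0\|^2<\infty$, that is, $U_h^0\in D(\Psi_h)$. In addition, $F=(f/\kappa,0)\in L^2(0,T;\Hmu)$ because $\|F\|_{\Hmu}^2=\kappa^{-1}\|f\|^2$. The cited theorems then give a unique strong solution with $U_h\in H^1(0,T;\Hmu)$ and $\Psi_h(U_h)\in W^{1,1}(0,T)$. The embedding $H^1(0,T;\Hmu)\subset C([0,T];\Hmu)$ gives the continuity claim in \eqref{eq:continuous_semidiscrete_regularity}. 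For \eqref{eq:continuous_semidiscrete_subgradient_regularity}, the inclusion $F-U_{h,t}\in\partial\Psi_h(U_h)$ a.e.\ is the definition of strong solution, and $F-U_{h,t}\in L^2(0,T;\Hmu)$ follows since both terms are in $L^2$.

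The stability estimate \eqref{eq:continuous_semidiscrete_stability} comes from the chain rule for convex functionals along $H^1$ curves \cite[Lemma~4.4 / Theorem~4.11]{barbu_nonlinear_2010} (Brezis' lemma). Since $U_h\in H^1(0,T;\Hmu)$ and there is an $L^2$ selection $\Xi_h=F-U_{h,t}\in\partial\Psi_h(U_h)$, the map $t\mapsto\Psi_h(U_h(t))$ is absolutely continuous with
\[
\frac{d}{dt}\Psi_h(U_h)=\langle\Xi_h,U_{h,t}\rangle_{\Hmu}=\langle F,U_{h,t}\rangle_{\Hmu}-\|U_{h,t}\|_{\Hmu}^2 .
\]
Young's inequality gives $\langle F,U_{h,t}\rangle_{\Hmu}\le\frac12\|F\|_{\Hmu}^2+\frac12\|U_{h,t}\|_{\Hmu}^2$, with $\|F\|_{\Hmu}^2=\kappa^{-1}\|f\|^2$. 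Integrating over $(0,t)$ then yields \eqref{eq:continuous_semidiscrete_stability}.

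The main delicate point is the chain-rule justification. It requires that the subgradient selection actually used is the $L^2$ one and that $\Psi_h(U_h)$ is absolutely continuous, not merely of class $W^{1,1}$ a posteriori. Both are supplied by the cited results for initial data in $D(\Psi_h)$. Unlike the continuous case, no characterization of $\partial\Psi_h$ is needed here, so the sum-rule qualification issue does not arise.
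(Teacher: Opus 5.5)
Your proposal is correct and follows essentially the same route as the paper: apply \cite[Theorems~4.2 and~4.11]{barbu_nonlinear_2010} to the maximal monotone operator $\partial\Psi_h$ with $U_h^0\in D(\Psi_h)=\mathcal C_h$ and $F\in L^2(0,T;\Hmu)$. The stability bound then comes, as in the paper, from testing the inclusion with $U_{h,t}$ via the subdifferential chain rule and Young's inequality. Your explicit checks that $\Psi_h(U_h^0)<\infty$ and that continuity follows from the embedding $H^1(0,T;\Hmu)\subset C([0,T];\Hmu)$ only fill in details the paper leaves implicit.
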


\begin{proof}
The proper, convex, lower semicontinuous functional $\Psi_h$ has a maximal monotone subdifferential in $\Hmu$. Since $D(\Psi_h)=\mathcal C_h$ and $F\in L^2(0,T;\Hmu)$, \cite[Theorems~4.2 and~4.11]{barbu_nonlinear_2010} gives the unique mild solution and, since $U_h^0\in D(\Psi_h)$, its strong-solution regularity \eqref{eq:continuous_semidiscrete_regularity}--\eqref{eq:continuous_semidiscrete_subgradient_regularity}. 
Finally, test the inclusion by $U_{h,t}$ in $\Hmu$ and use the subdifferential chain rule to obtain
\[
 \|U_{h,t}\|_{\Hmu}^2
 +\frac d{dt}\Psi_h(U_h)
 =\langle F,U_{h,t}\rangle_{\Hmu}.
\]
Young's inequality and $\|F\|_{\Hmu}^2=\kappa^{-1}\|f\|^2$ give \eqref{eq:continuous_semidiscrete_stability} after integration.
\end{proof}


\subsection{Semidiscrete error estimates}
  For comparison with the exact solution, we construct a gap-preserving discrete state by compensating the Ritz projection error of the field in the memory variables. The resulting estimate therefore requires no independent spatial approximation, or additional spatial regularity, of the hysteresis variables.

  Specifically, for almost every $t\in(0,T)$, define the gap-preserving comparison state
  $U_h^*(t)\in V_h\times\Mmu$ by
  \begin{equation}
  U_h^*(t)=(R_hu(t),\widetilde w_h(t)),
  \qquad
  \widetilde w_h(t,r):=w(t,r)+(R_hu(t)-u(t)),
  \label{eq:continuous_spatial_comparison}
  \end{equation}
  where $\widetilde w_h$ is chosen so that the comparison state has the same gap as the exact state, i.e., $\Gapop U = \Gapop U_h^*$. 
  Indeed,
  \begin{align}
  &R_hu(t)-\widetilde w_h(t,r)
  =u(t)-w(t,r)=q(t,r)\in K_r.
  \label{eq:continuous_spatial_gap_preservation}
  \end{align} 
  Here $\widetilde w_h(t)\in\Mmu$ and consequently, $U_h^*(t)\in\mathcal C_h$, because
  \[
  \|\widetilde w_h(t)\|_{\Mmu}
  \leq\|w(t)\|_{\Mmu}
  +S_c^{1/2}\|R_hu(t)-u(t)\|.
  \]  
  For convenience, we define the spatial data quantity
  \begin{equation}
    \mathcal M_{\mathrm{sp}}
    :=\sqrt\Lambda\,\|u\|_{L^2(0,T;H^2)}
    +\|f\|_{L^2(0,T;H)}
    +\sqrt\Lambda\,\|\nabla u_h^0\|.
    \label{eq:continuous_spatial_data}
  \end{equation}

\begin{theorem}
\label{thm:continuous_semidiscrete_error}
Under Assumption~\ref{ass:continuous_model}, let $U$ solve \eqref{eq:continuous_gradient_flow} and let $U_h$ solve \eqref{eq:continuous_semidiscrete_gradient}. Then
\begin{align}
 &\max_{t\in[0,T]}\left[
 \kappa\|u(t)-u_h(t)\|^2+\|w(t)-w_h(t)\|_{\Mmu}^2\right]
 +\int_0^T\|\nabla(u-u_h)\|^2\,dt\notag\\
 &\quad\lesssim
 \kappa\|u^0-u_h^0\|^2+\|w^0-w_h^0\|_{\Mmu}^2+
 h^2\mathcal M_{\mathrm{sp}}^2,
 \label{eq:continuous_semidiscrete_Oh}
\end{align}
The hidden constant depends only on $C_{\rm app}$.
\end{theorem}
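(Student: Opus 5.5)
We will split the error through the gap-preserving comparison state \eqref{eq:continuous_spatial_comparison}:
\[
U-U_h=(U-U_h^*)+(U_h^*-U_h)=:\rho+\theta,
\qquad
\rho=(u-R_hu,\;u-R_hu)
\]
(here the second component is the constant map $r\mapsto u-R_hu$), and
\[
\theta=(R_hu-u_h,\;\widetilde w_h-w_h).
\]
The interpolation part $\rho$ is controlled directly. Using \eqref{eq:continuous_Ritz_H2} and $\|\cdot\|_{\Mmu}^2=S_c\|\cdot\|^2$ for constant maps,
\[
\|\rho(t)\|_{\Hmu}^2\le(\kappa+S_c)\|u-R_hu\|^2
\quad\text{and}\quad
\int_0^T\|\nabla(u-R_hu)\|^2\,dt\lesssim h^2\|u\|_{L^2(0,T;H^2)}^2 .
\]
For the pointwise-in-time $\|u-R_hu\|$ we only have $u\in L^\infty(0,T;V)$. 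We therefore plan to use \eqref{eq:continuous_Ritz_energy} with an energy bound on $\|\nabla u\|_{L^\infty(0,T;H)}$, or, more safely, to avoid needing $\rho$ at every time by estimating $\|U-U_h\|_{\Hmu}$ directly, as described next. We presume $\Lambda$ absorbs constants such as $\kappa+S_c$.

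The main step is an energy estimate for $e:=U-U_h$ using both subgradient inclusions. Let $\Xi=F-U_t\in\partial\Psi(U)$ and $\Xi_h=F-U_{h,t}\in\partial\Psi_h(U_h)$, both in $L^2(0,T;\Hmu)$ by Corollary~\ref{prop:continuous_wellposedness} and Proposition~\ref{prop:continuous_semidiscrete_wellposedness}. Then
\[
\frac12\frac{d}{dt}\|e\|_{\Hmu}^2
=\langle \Xi_h-\Xi,\,U-U_h\rangle_{\Hmu}
=\langle\Xi_h-\Xi,\,U-U_h^*\rangle+\langle \Xi_h-\Xi,\,U_h^*-U_h\rangle .
\]
For the second term we use the strengthened remainder for $\Psi_h$ in \eqref{eq:continuous_discrete_strong_remainder} with comparison point $U_h^*\in\mathcal C_h$, and the continuous remainder from Corollary~\ref{cor:continuous_strong_remainder}. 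With the explicit form $\Xi=(\frac1\kappa(-\Delta u+\Aggop\eta),-\eta)$, the term $\langle\Xi,U_h^*-U_h\rangle$ splits into
\[
(\nabla u,\nabla(R_hu-u_h))
\quad\text{and}\quad
-\int c\,(\eta,\,\Gapop U_h^*-\Gapop U_h)\,d\mu .
\]
Since $\Gapop U_h^*=\Gapop U=q$ and $\Gapop U_h\in\mathcal K$, the normal inequality makes this memory contribution nonnegative in the right direction; it is the essential role of gap preservation. The Dirichlet parts combine, via Galerkin orthogonality of $R_h$, to
\[
-\|\nabla(R_hu-u_h)\|^2 .
\]
The first term $\langle\Xi_h-\Xi,\rho\rangle$ is the consistency term. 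Since $\Xi_h-\Xi=U_t-U_{h,t}$, we write it as $\langle e_t,\rho\rangle$. Integrating by parts in time gives
\[
\langle e,\rho\rangle\Big|_0^t-\int\langle e,\rho_t\rangle .
\]
Here $\rho_t=(u_t-R_hu_t,\dots)$ is bounded by $C_{\rm app}h\|\nabla u_t\|$, which is unavailable. So instead we bound $\langle U_{h,t}-U_t,\rho\rangle$ directly: $\|\rho\|_{\Hmu}\lesssim h\|u\|_V$ by \eqref{eq:continuous_Ritz_energy}, $\|U_t\|_{L^2(\Hmu)}\lesssim\|f\|+\|\nabla u^0\|$, and the stability bound \eqref{eq:continuous_semidiscrete_stability} controls $\|U_{h,t}\|_{L^2(\Hmu)}$ by $\|\nabla u_h^0\|+\|f\|$. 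This explains the structure of $\mathcal M_{\rm sp}$. To reach an $h^2$ (squared) bound, we use Cauchy--Schwarz as
\[
\int\|U_t-U_{h,t}\|\,\|\rho\| \le \|U_t-U_{h,t}\|_{L^2}\,\|\rho\|_{L^2},
\qquad
\|\rho\|_{L^2(\Hmu)}\lesssim h^2\|u\|_{L^2(H^2)} .
\]
This gives $h^2\cdot\mathcal M_{\rm sp}^2$ after Young's inequality.

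Finally, we integrate in time, take the maximum over $t$, and recover $\int\|\nabla(u-u_h)\|^2$ through the triangle inequality with $\int\|\nabla(u-R_hu)\|^2\lesssim h^2\|u\|_{L^2H^2}^2$. The main obstacle we expect is bounding the consistency term without time derivatives of $R_hu-u$ and without $L^\infty$-in-time $H^2$ regularity. The plan is to keep it in the form $\langle U_t-U_{h,t},\rho\rangle$ and control it by the $L^2(\Hmu)$ velocities from both energy identities, accepting the resulting dependence on $\|f\|$, $\|\nabla u_h^0\|$ and $\|u\|_{L^2H^2}$ that appears in $\mathcal M_{\rm sp}$.
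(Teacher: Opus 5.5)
Your architecture matches the paper's: the gap-preserving comparison state $U_h^*$, the coercive remainders \eqref{eq:continuous_strong_remainder} and \eqref{eq:continuous_discrete_strong_remainder}, Galerkin orthogonality of $R_h$, and an $L^2$-in-time Cauchy--Schwarz of a velocity against $\|u-R_hu\|\le C_{\rm app}h^2\|u\|_{H^2}$. Your bound $\langle\Xi_h-\Xi,U_h^*-U_h\rangle_{\Hmu}\le-\|\nabla(R_hu-u_h)\|^2$ is correct. One small sign correction: the memory part of $\langle\Xi,U_h^*-U_h\rangle_{\Hmu}$ is $+\int_{\Rcal}c(r)\bigl(\eta(r),q(r)-[\Gapop U_h](r)\bigr)_H\,d\mu(r)\ge0$, not minus. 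It helps you because it enters $\frac12\frac{d}{dt}\|e\|_{\Hmu}^2$ with a minus sign.

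The genuine gap is in the consistency term. You write $\langle\Xi_h-\Xi,\rho\rangle_{\Hmu}=\langle U_t-U_{h,t},\rho\rangle_{\Hmu}$ and bound both velocities. This brings in $\|U_t\|_{L^2(0,T;\Hmu)}\le\|\nabla u^0\|+\kappa^{-1/2}\|f\|_{L^2(0,T;H)}$. The quantity $\|\nabla u^0\|$ is not part of $\mathcal M_{\mathrm{sp}}$, which contains $\|\nabla u_h^0\|$ instead. For a general $U_h^0\in\mathcal C_h$ it is not dominated by the right-hand side of the theorem, where only the $L^2$ distance $\|u^0-u_h^0\|$ appears. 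So as written you prove the bound with $h^2(\mathcal M_{\mathrm{sp}}^2+\Lambda\|\nabla u^0\|^2)$, and your remark that this ``explains the structure of $\mathcal M_{\mathrm{sp}}$'' is not accurate.

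The missing observation is that the exact subgradient paired with $\rho$ has a sign. Since $\Gapop\rho=0$, the $\eta$-contributions cancel, and Galerkin orthogonality gives
\[
\langle\Xi,\rho\rangle_{\Hmu}=(-\Delta u,u-R_hu)_H=\|\nabla(u-R_hu)\|^2\ge0 .
\]
Dropping this term leaves only $\langle F-U_{h,t},\rho\rangle_{\Hmu}=(f-\kappa u_{h,t}-\Aggop w_{h,t},u-R_hu)_H$. This is exactly the paper's consistency term. It involves only the semidiscrete velocity, which \eqref{eq:continuous_semidiscrete_stability} controls through $\|\nabla u_h^0\|$ and $\|f\|_{L^2(0,T;H)}$.

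The paper reaches this directly. It tests \eqref{eq:continuous_strong_remainder} at $U_h$ rather than at $U_h^*$, and adds \eqref{eq:continuous_discrete_strong_remainder} tested at $U_h^*$. Then $U_t$ never meets $\rho$, and the leftover $\Psi_h(U_h^*)-\Psi(U)=-\frac12\|\nabla(u-R_hu)\|^2$ is discarded. With this correction your splitting even yields $\frac12\frac{d}{dt}\|e\|_{\Hmu}^2+\|\nabla(u-u_h)\|^2\le\langle F-U_{h,t},\rho\rangle_{\Hmu}$ by Pythagoras, so your final triangle-inequality step becomes unnecessary.

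Your weaker bound would still be enough for Theorem~\ref{thm:continuous_full_discretization}, whose data already include $\|u^0\|_V$. It does not, however, give the statement as formulated.
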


\begin{proof}
Choose $\widehat U = U_h$ and $\Xi = F-U_t$ in \eqref{eq:continuous_strong_remainder}, while $R_h^*=U_h^*$ and $\Xi_h = F - U_{h,t}$ in \eqref{eq:continuous_discrete_strong_remainder}, then add. For $e_h=U-U_h$ this gives
\begin{align*}
 \frac12\frac d{dt}\|e_h\|_{\Hmu}^2
 &+\frac12\|\nabla(u-u_h)\|^2
 +\frac12\|\nabla(u_h-R_hu)\|^2\\
 &\leq
 \langle F-U_{h,t},U-U_h^*\rangle_{\Hmu}
 -\{\Psi(U)-\Psi_h(U_h^*)\}.
\end{align*}
The last term is nonpositive by 
\begin{equation}
  \Psi(U)-\Psi_h(U_h^*) = \frac12\|\nabla u(t)\|^2-
   \frac12\|\nabla R_hu(t)\|^2
   =\frac12\|\nabla(u(t)-R_hu(t))\|^2\geq0.
   \label{eq:continuous_Ritz_energy_identity}
\end{equation}
Integration gives
\begin{align}
  &\max_{t\in[0,T]}\left[
  \kappa\|u(t)-u_h(t)\|^2+\|w(t)-w_h(t)\|_{\Mmu}^2\right]
  +\int_0^T\|\nabla(u-u_h)\|^2\,dt\notag\\
  &\quad\leq
  \kappa\|u^0-u_h^0\|^2+\|w^0-w_h^0\|_{\Mmu}^2+
  2\int_0^T
  \left|
  \left(f-\kappa u_{h,t}
  -\Aggop w_{h,t},
  u-R_hu\right)_H
  \right|dt.
  \label{eq:continuous_semidiscrete_quasioptimal}
\end{align}
By \eqref{eq:continuous_Ritz_H2} and Cauchy--Schwarz inequality, the last term integral is bounded by
\[
 C_{\rm app}h^2
 \left(\|f\|_{L^2(0,T;H)}
 +\sqrt\Lambda\|U_{h,t}\|_{L^2(0,T;\Hmu)}\right)
 \|u\|_{L^2(0,T;H^2)}.
\]
The stability estimate \eqref{eq:continuous_semidiscrete_stability} controls $U_{h,t}$ by $\|\nabla u_h^0\|$ and $\kappa^{-1/2}\|f\|_{L^2}$. Young's inequality gives \eqref{eq:continuous_semidiscrete_Oh}.
\end{proof}

The natural gap-preserving initialization is
\begin{equation}
 u_h^0=R_hu^0,\qquad
 w_h^0(r)=w^0(r)+(R_hu^0-u^0).
 \label{eq:continuous_spatial_initialization}
\end{equation}
It gives
\begin{equation}
 \|U^0-U_h^0\|_{\Hmu}^2
 =\Lambda\|u^0-R_hu^0\|^2
 \leq\Lambda C_{\rm app}^2h^2\|u^0\|_V^2
 \label{eq:continuous_initial_spatial_error}
\end{equation}
and $\|\nabla u_h^0\|\leq\|\nabla u^0\|$. Hence \eqref{eq:continuous_semidiscrete_Oh} proves first-order spatial convergence in the weighted $L^2$ state and field energy norms.

Define the discrete Laplacian operator $\Delta_h :V_h\to V_h$ by
\begin{equation}
 (\Delta_h v_h,z_h)_H=(\nabla v_h,\nabla z_h)_H
 \qquad\forall v_h,z_h\in V_h.
 \label{eq:continuous_discrete_laplacian_operator}
\end{equation}
For the initialization \eqref{eq:continuous_spatial_initialization}, we have 
\begin{equation}
 \Delta_h u_h^0=-\Pi_h^0\Delta u^0.
 \label{eq:continuous_discrete_initial_operator}
\end{equation}

\begin{proposition}[Uniform discrete initial velocity]
\label{prop:continuous_discrete_initial_velocity}
Assume $\Delta u^0\in H$, let $f^0\in H$, and initialize by \eqref{eq:continuous_spatial_initialization}. Then $U_h^0\in D(\partial\Psi_h)$. Moreover, with $F^0=(f^0/\kappa,0)$, the initial velocity
\begin{equation}
 \delta_0U_{h}^0
 :=\left(\frac1\kappa\Pi_h^0(f^0+\Delta u^0),0\right)\in F^0-\partial\Psi_h(U_h^0)
 \label{eq:continuous_discrete_minimal_initial_velocity}
\end{equation}
satisfies the $h$-independent bound
\begin{equation}
 \|\delta_0U_{h}^0\|_{\Hmu}
 \leq\frac1{\sqrt\kappa}\|f^0+\Delta u^0\|.
 \label{eq:continuous_discrete_initial_velocity_bound}
\end{equation}
\end{proposition}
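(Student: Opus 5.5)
The plan is to verify the subgradient inequality for $\Psi_h$ directly at $U_h^0$ with one explicit candidate, rather than to characterize $\partial\Psi_h$ in full. The $h$-independent bound will then follow from the fact that $\Pi_h^0$ is an $H$-orthogonal projection.

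First, $U_h^0\in\mathcal C_h=D(\Psi_h)$. Indeed, $u_h^0=R_hu^0\in V_h$, and the initialization \eqref{eq:continuous_spatial_initialization} preserves the gap:
\[
R_hu^0-w_h^0(r)=u^0-w^0(r)=q^0(r)\in K_r .
\]
Moreover $w_h^0\in\Mmu$ by the same bound used for $\widetilde w_h$.

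Next I would set
\[
\Xi^0:=F^0-\delta_0U_h^0=\Bigl(\tfrac1\kappa\bigl[(I-\Pi_h^0)f^0-\Pi_h^0\Delta u^0\bigr],\,0\Bigr).
\]
I would show $\Xi^0\in\partial\Psi_h(U_h^0)$; this gives both $U_h^0\in D(\partial\Psi_h)$ and the inclusion \eqref{eq:continuous_discrete_minimal_initial_velocity}. Take any $\widehat U=(v,z)\in\mathcal C_h$; points outside $\mathcal C_h$ are trivial because $\Psi_h=+\infty$ there. Since the memory component of $\Xi^0$ vanishes, the weighted product \eqref{eq:continuous_state_product} reduces to
\[
\langle\Xi^0,\widehat U-U_h^0\rangle_{\Hmu}=\bigl((I-\Pi_h^0)f^0-\Pi_h^0\Delta u^0,\;v-u_h^0\bigr)_H .
\]
Because $v-u_h^0\in V_h$, the term $(I-\Pi_h^0)f^0$ is $H$-orthogonal to it and drops out, and $\Pi_h^0$ can be removed from the Laplacian term. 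Integrating by parts with $\Delta u^0\in H$ and $v-u_h^0\in V$, and then applying the Ritz identity \eqref{eq:continuous_Ritz_projection}, gives
\[
(-\Pi_h^0\Delta u^0,v-u_h^0)_H=(-\Delta u^0,v-u_h^0)_H=(\nabla u^0,\nabla(v-u_h^0))_H=(\nabla u_h^0,\nabla(v-u_h^0))_H .
\]
Convexity of $\tfrac12\|\nabla\cdot\|^2$ then yields
\[
\tfrac12\|\nabla v\|^2-\tfrac12\|\nabla u_h^0\|^2\ge(\nabla u_h^0,\nabla(v-u_h^0))_H ,
\]
which is exactly $\Psi_h(\widehat U)-\Psi_h(U_h^0)\ge\langle\Xi^0,\widehat U-U_h^0\rangle_{\Hmu}$. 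This is the same computation that underlies \eqref{eq:continuous_discrete_initial_operator}, and it amounts to choosing the normal selection $\eta=0$. No hysteresis constraint is active in this argument, since the constraint enters only through the requirement $\widehat U\in\mathcal C_h$.

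Finally, the bound \eqref{eq:continuous_discrete_initial_velocity_bound} follows from the weighted norm and the contractivity of $\Pi_h^0$:
\[
\|\delta_0U_h^0\|_{\Hmu}=\sqrt\kappa\cdot\tfrac1\kappa\|\Pi_h^0(f^0+\Delta u^0)\|\le\kappa^{-1/2}\|f^0+\Delta u^0\| .
\]
The only delicate point is the middle step. One must notice that the component $(I-\Pi_h^0)f^0$ need not lie in $V_h$ but is harmless, because subgradients of $\Psi_h$ are determined only up to components $H$-orthogonal to $V_h$ in the field slot. One must also use the exact Ritz initialization, so that $\Delta u^0$ tested on $V_h$ reproduces $\nabla u_h^0$.
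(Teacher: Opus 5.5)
Your proposal is correct and follows the paper's proof. Your candidate $\Xi^0=\bigl(\tfrac1\kappa[(I-\Pi_h^0)f^0-\Pi_h^0\Delta u^0],0\bigr)$ is exactly the paper's element $\bigl(\tfrac1\kappa\{\Delta_h u_h^0+f^0-\Pi_h^0f^0\},0\bigr)$, using $\Delta_h u_h^0=-\Pi_h^0\Delta u^0$. Both arguments verify it through the subgradient inequality with zero normal selection and conclude with the contractivity of $\Pi_h^0$; you simply write out the steps the paper leaves implicit (gap preservation, orthogonality to $V_h$, the Ritz identity, and convexity).
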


\begin{proof}
Since $u_h^0=R_hu^0$, definitions \eqref{eq:continuous_Ritz_projection} and \eqref{eq:continuous_discrete_laplacian_operator} prove \eqref{eq:continuous_discrete_initial_operator}. The subgradient inequality with the zero normal selection shows that
\[
 \left(\frac1\kappa
 \{\Delta_h u_h^0+f^0-\Pi_h^0f^0\},0\right)
 \in\partial\Psi_h(U_h^0).
\]
Consequently, $F^0-\partial\Psi_h(U_h^0)$ contains
\[
 \left(\frac1\kappa\Pi_h^0(f^0+\Delta u^0),0\right).
\]
The contractivity of the $H$-orthogonal projection $\Pi_h^0$ gives \eqref{eq:continuous_discrete_initial_velocity_bound}.
\end{proof}

\subsection{Fully discrete error estimates}

We apply the implicit Euler scheme in Section~\ref{sec:continuous_pi_time} to the restricted subgradient flow \eqref{eq:continuous_semidiscrete_gradient} to derive the space-time fully discrete approximation. 
Given $U_{h\tau}^0=U_h^0\in\mathcal C_h$, the fully discrete scheme seeks $U_{h\tau}^n=(u_{h\tau}^n,w_{h\tau}^n)\in V_h\times\Mmu$ for
$n=1,\ldots,N$ such that
\begin{equation}
  \delta_nU_{h\tau}^n+
  \partial\Psi_h(U_{h\tau}^n)\ni F^n,
  \qquad n=1,\ldots,N,
  \qquad U_{h\tau}^0=U_h^0.
  \label{eq:continuous_fully_discrete_gradient}
\end{equation}
The resolvent of $\partial\Psi_h$ therefore gives a unique state $U_{h\tau}^n\in\mathcal C_h$ at every time step.
Equivalently, we may use the following scheme in practical solution. 
\begin{scheme}[Space-time discrete scheme]
  Given $(u_{h\tau}^0,w_{h\tau}^0)\in D(\Psi_h)$ and $f^n\in H$, seek $u_{h\tau}^n\in V_h$, $n=1,\ldots,N$, satisfying
  \begin{align}
    & \left(\kappa\delta_nu_{h\tau}^n+
    \Aggop\delta_nw_{h\tau}^n,v_h\right)_H
    +(\nabla u_{h\tau}^n,\nabla v_h)_H
    =(f^n,v_h)_H
    \qquad\forall v_h\in V_h,
    \label{eq:continuous_fully_discrete_balance}\\
    &w_{h\tau}^n(r)=u_{h\tau}^n-P_{K_r}\left(u_{h\tau}^n-w_{h\tau}^{n-1}(r)\right), \quad\text{for $\mu$-a.e. }r\in\Rcal.
    \label{eq:continuous_fully_discrete_play}
  \end{align}
\end{scheme}

Apply the affine and right-endpoint reconstruction formulas \eqref{eq:continuous_state_reconstructions} to the nodal sequence $(U_{h\tau}^n)_{n=0}^N$, and denote the results by $U_{h\tau}$ and $\overline U_{h\tau}$.

\begin{proposition}
\label{prop:continuous_restricted_temporal_error}
Let Assumption~\ref{ass:continuous_model} hold and assume
\[
 \Delta u^0\in H,
 \qquad f\in BV_r(0,T;H).
\]
Initialize by \eqref{eq:continuous_spatial_initialization} and set
$f^0=f(0)$. Let $U_h$ solve
\eqref{eq:continuous_semidiscrete_gradient}. With right-endpoint data, the
fully discrete scheme \eqref{eq:continuous_fully_discrete_gradient} has a
unique solution and
\begin{align}
 &\max\Bigg\{
 \max_{t\in[0,T]}\left(
 \kappa\|u_h(t)-u_{h\tau}(t)\|^2
 +\|w_h(t)-w_{h\tau}(t)\|_{\Mmu}^2\right)^{1/2},
 \left(\|u_h-u_{h\tau}\|_{L^2(0,T;V)}^2
 +\|u_h-\overline u_{h\tau}\|_{L^2(0,T;V)}^2\right)^{1/2}\Bigg\}\notag\\
 &\quad\leq\frac\tau{\sqrt\kappa}
 \left[\frac1{\sqrt2}\|f(0)+\Delta u^0\|
 +2\operatorname{Var}(f;[0,T])\right].
 \label{eq:continuous_restricted_temporal_error}
\end{align}
\end{proposition}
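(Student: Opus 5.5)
The plan is to repeat the proof of Theorem~\ref{thm:continuous_first_order} with the continuous energy $\Psi$ replaced by the restricted energy $\Psi_h$. The abstract Nochetto--Savar\'e--Verdi theory \citep{nochetto_posteriori_2000} needs only four ingredients, and each has already been checked for $\Psi_h$:
\begin{itemize}
\item a Hilbert space, which is $\Hmu$;
\item a proper, convex, lower semicontinuous energy, which is $\Psi_h$;
\item the implicit Euler inclusion \eqref{eq:continuous_fully_discrete_gradient};
\item a strengthened subgradient remainder, which is \eqref{eq:continuous_discrete_strong_remainder} with $\sigma_h(U;\widehat U)=\frac12\|\nabla(u-\widehat u)\|^2$ on $\mathcal C_h$.
\end{itemize}

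Existence and uniqueness of the fully discrete sequence follow from maximal monotonicity of $\partial\Psi_h$. The resolvent $(I+\tau_n\partial\Psi_h)^{-1}$ is single-valued and everywhere defined, so $U_{h\tau}^n=(I+\tau_n\partial\Psi_h)^{-1}(U_{h\tau}^{n-1}+\tau_nF^n)\in D(\partial\Psi_h)$ is uniquely determined. The continuous counterpart of the discrete sequence is the semidiscrete solution $U_h$ of Proposition~\ref{prop:continuous_semidiscrete_wellposedness}. Its initial state $U_h^0$ lies in $D(\partial\Psi_h)$ by Proposition~\ref{prop:continuous_discrete_initial_velocity}.

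Next I would apply \cite[Theorem~3.2 and Remark~3.3]{nochetto_posteriori_2000} to the pair $(U_h,U_{h\tau})$. Since $U_{h\tau}^0=U_h^0$, the initial error vanishes, and the error quantity $\mathfrak E$ (built with $\sigma_h$) is bounded by $(\sum_n\tau_n^2E_n^h)^{1/2}+\|F-\overline F\|_{L^1(0,T;\Hmu)}$, where $E_n^h$ is defined as in \eqref{eq:continuous_estimator_En} with $\Psi_h$ in place of $\Psi$. Unfolding $\mathfrak E$ in the original variables gives exactly the left-hand side of \eqref{eq:continuous_restricted_temporal_error}. Here $\kappa$ is kept in the $L^\infty$ part, and $2\int\sigma_h$ is the sum of the two $L^2(0,T;V)$ terms.

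I would then bound $E_n^h\le D_n^h$ as in \eqref{eq:continuous_En_le_Dn}, using monotonicity of $\partial\Psi_h$. For $n=1$, I would take $\delta_0U_{h\tau}^0:=\delta_0U_h^0$ from \eqref{eq:continuous_discrete_minimal_initial_velocity}; this is an admissible element of $F^0-\partial\Psi_h(U_h^0)$. Then \cite[Theorem~3.18]{nochetto_posteriori_2000} gives
\[
\Bigl(\sum_n D_n^h\Bigr)^{1/2}\le \tfrac1{\sqrt2}\|\delta_0U_h^0\|_{\Hmu}+\operatorname{Var}(\overline F;[0,T]).
\]
Combining this with $\sum_n\tau_n^2D_n^h\le\tau^2\sum_nD_n^h$ bounds the $E_n^h$-term by $\tau$ times the right-hand side above.

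Finally, I would insert three estimates:
\begin{itemize}
\item the $h$-independent bound \eqref{eq:continuous_discrete_initial_velocity_bound}, which gives $\|\delta_0U_h^0\|_{\Hmu}\le\kappa^{-1/2}\|f(0)+\Delta u^0\|$;
\item $\operatorname{Var}(\overline F;[0,T])\le\kappa^{-1/2}\operatorname{Var}(f;[0,T])$;
\item $\|F-\overline F\|_{L^1(0,T;\Hmu)}=\kappa^{-1/2}\|f-\overline f\|_{L^1(0,T;H)}\le\kappa^{-1/2}\tau\operatorname{Var}(f;[0,T])$, which uses right-endpoint sampling of the right-continuous representative.
\end{itemize}
Adding these yields the constant $\frac1{\sqrt2}\|f(0)+\Delta u^0\|+2\operatorname{Var}(f;[0,T])$.

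The main obstacle is uniformity in $h$. The abstract constant must not depend on anything that degenerates with $h$, and that rests entirely on the initial discrete velocity. A naive choice $-\Delta_h u_h^0$ would be bounded only through inverse estimates. The role of Proposition~\ref{prop:continuous_discrete_initial_velocity} is exactly to provide an element of $F^0-\partial\Psi_h(U_h^0)$ whose norm is controlled by $\|f^0+\Delta u^0\|$. Some care is also needed to confirm that Theorem~3.18 only requires some element of that set, not the minimal section; if it did require the minimal section, the minimal-norm element would be bounded by this one anyway, so the estimate is unaffected.
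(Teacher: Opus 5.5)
Your proposal is correct and follows the paper's proof. You rerun the argument of Theorem~\ref{thm:continuous_first_order} with $\Psi_h$ in place of $\Psi$, using the coercive remainder \eqref{eq:continuous_discrete_strong_remainder}, the $h$-uniform initial velocity of Proposition~\ref{prop:continuous_discrete_initial_velocity}, and $\operatorname{Var}(\overline F;[0,T])\leq\kappa^{-1/2}\operatorname{Var}(f;[0,T])$, spelling out what the paper compresses into one line; one small correction to your closing aside is that the Ritz initialization already gives $\Delta_h u_h^0=-\Pi_h^0\Delta u^0$ by \eqref{eq:continuous_discrete_initial_operator}, so the zero-normal choice is bounded uniformly in $h$ without inverse estimates, and what Proposition~\ref{prop:continuous_discrete_initial_velocity} really adds, by absorbing $(I-\Pi_h^0)f^0$ into the normal cone of $\mathcal C_h\subset V_h\times\Mmu$, is exactly the constant $\|f(0)+\Delta u^0\|$ appearing in the statement.
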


\begin{proof}
The abstract argument used for Theorem~\ref{thm:continuous_first_order} applies with $\Psi$ replaced by $\Psi_h$: properness, convexity, and lower semicontinuity were established above, while \eqref{eq:continuous_discrete_strong_remainder} supplies the same coercive remainder. Proposition~\ref{prop:continuous_discrete_initial_velocity} provides the initial-velocity bound uniformly in $h$, and $\operatorname{Var}(F;[0,T])\leq\kappa^{-1/2} \operatorname{Var}(f;[0,T])$.
\end{proof}

\begin{theorem}[Fully discrete error estimates]
\label{thm:continuous_full_discretization}
Let Assumption~\ref{ass:continuous_model} hold and assume
\[
 \Delta u^0\in H,
 \qquad f\in BV_r(0,T;H).
\]
Use right-endpoint data and the initialization
\eqref{eq:continuous_spatial_initialization}. Then
\begin{align}
 &\max_{t\in[0,T]}\left(
 \kappa\|u(t)-u_{h\tau}(t)\|^2
 +\|w(t)-w_{h\tau}(t)\|_{\Mmu}^2\right)^{1/2}
 +\|u-\overline u_{h\tau}\|_{L^2(0,T;V)}
 \notag\\
 &\qquad\leq
 C(h+\tau)
 \left[
 \|u^0\|_V+\|\Delta u^0\|+
 (1+\sqrt T)
 \{\|f(0)\|+\operatorname{Var}(f;[0,T])\}
 \right].
 \label{eq:continuous_full_discretization}
\end{align}
The constant depends only on $C_{\rm app}$, $C_{\rm ell}$, $\kappa$, and $S_c$.
\end{theorem}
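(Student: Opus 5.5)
The plan is to split the total error by the triangle inequality through the semidiscrete solution $U_h$ of \eqref{eq:continuous_semidiscrete_gradient}:
\[
U-U_{h\tau}=(U-U_h)+(U_h-U_{h\tau}),\qquad u-\overline u_{h\tau}=(u-u_h)+(u_h-\overline u_{h\tau}).
\]
The temporal parts $\max_t\|U_h(t)-U_{h\tau}(t)\|_{\Hmu}$ and $\|u_h-\overline u_{h\tau}\|_{L^2(0,T;V)}$ are bounded directly by Proposition~\ref{prop:continuous_restricted_temporal_error}. Its right-hand side is $\tau\kappa^{-1/2}[\|f(0)\|+\|\Delta u^0\|+2\operatorname{Var}(f;[0,T])]$, which is already of the required form. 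Note that $\max_t(\kappa\|\cdot\|^2+\|\cdot\|_{\Mmu}^2)^{1/2}$ is exactly $\|\cdot\|_{\Hmu}$, so the sum of the maximum and the $L^2(0,T;V)$ term on the left of \eqref{eq:continuous_full_discretization} is at most twice the maximum of the two.

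For the spatial part I will invoke Theorem~\ref{thm:continuous_semidiscrete_error} with the gap-preserving initialization \eqref{eq:continuous_spatial_initialization}. By \eqref{eq:continuous_initial_spatial_error}, the initial error is bounded by $\Lambda C_{\rm app}^2h^2\|u^0\|_V^2$. Here $\Lambda$ is a constant depending only on $\kappa$ and $S_c$; I read it as $\kappa+S_c$, since $\|U^0-U_h^0\|_{\Hmu}^2=(\kappa+S_c)\|u^0-R_hu^0\|^2$. It remains to bound $\mathcal M_{\rm sp}$ in \eqref{eq:continuous_spatial_data} by the data, and this is where the work lies.

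\begin{itemize}
\item The term $\|\nabla u_h^0\|$ is at most $\|u^0\|_V$, by the stability of the Ritz projection.
\item For $\|f\|_{L^2(0,T;H)}$, use the BV bound $\sup_t\|f(t)\|\leq\|f(0)\|+\operatorname{Var}(f;[0,T])$. This gives $\|f\|_{L^2(0,T;H)}\leq\sqrt T(\|f(0)\|+\operatorname{Var}(f))$, which explains the factor $\sqrt T$.
\item The main obstacle is $\|u\|_{L^2(0,T;H^2)}$. By \eqref{eq:continuous_natural_H2} it suffices to bound $\|\Delta u\|_{L^2(0,T;H)}$. By \eqref{eq:continuous_derived_laplacian} and \eqref{eq:continuous_memory_aggregate_bound}, $\|\Delta u\|\leq\|f\|+\max\{\sqrt\kappa,S_c^{1/2}\}\|U_t\|_{\Hmu}$. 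The energy bound $\|U_t\|_{L^2(\Hmu)}^2\lesssim\|\nabla u^0\|^2+\kappa^{-1}\|f\|_{L^2(H)}^2$, obtained as in \eqref{eq:continuous_semidiscrete_stability} for the continuous flow, would suffice. It is cleaner, though, to use the BV regularity: for subgradient flows with $U^0\in D(\partial\Psi)$ and $F\in BV$, one has
\[
\|U_t\|_{L^\infty(0,T;\Hmu)}\leq\|\delta_0U^0\|_{\Hmu}+\operatorname{Var}(F;[0,T]).
\]
This follows from the contraction property, comparing $U(\cdot+s)$ with $U$, or alternatively as a limit of the discrete estimate \eqref{eq:continuous_Dn_BV_control}. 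It yields $\|U_t\|_{L^2(\Hmu)}\leq\sqrt T\,\kappa^{-1/2}(\|f(0)\|+\|\Delta u^0\|+\operatorname{Var}(f))$.
\end{itemize}

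I would first try the simple energy route, since it only requires the stated strong-solution regularity. That route bounds $\|u\|_{L^2(H^2)}$ by $C_{\rm ell}$ times a combination of $\|u^0\|_V$ and $\sqrt T(\|f(0)\|+\operatorname{Var}(f))$.

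Collecting everything gives $h\mathcal M_{\rm sp}\lesssim h[\|u^0\|_V+\|\Delta u^0\|+(1+\sqrt T)(\|f(0)\|+\operatorname{Var}(f))]$. After taking square roots in \eqref{eq:continuous_semidiscrete_Oh} and adding the temporal bound, we obtain \eqref{eq:continuous_full_discretization}. The constant depends on $C_{\rm app}$ (through Theorem~\ref{thm:continuous_semidiscrete_error} and the initialization), on $C_{\rm ell}$ (through the $H^2$ bound), and on $\kappa$ and $S_c$ (through $\Lambda$ and the aggregate bound). The factor $1$ in $(1+\sqrt T)$ absorbs the $T$-independent temporal contribution.
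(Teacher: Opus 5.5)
Your proposal is correct and is essentially the paper's proof. You use the same splitting through $U_h$ and Proposition~\ref{prop:continuous_restricted_temporal_error} for the $O(\tau)$ part; for the $O(h)$ part you use Theorem~\ref{thm:continuous_semidiscrete_error} with \eqref{eq:continuous_initial_spatial_error}, the energy bound $\|U_t\|_{L^2(0,T;\Hmu)}\leq\|\nabla u^0\|+\kappa^{-1/2}\|f\|_{L^2(0,T;H)}$, \eqref{eq:continuous_derived_laplacian} with \eqref{eq:continuous_natural_H2}, and $\|f\|_{L^2(0,T;H)}\leq\sqrt T\{\|f(0)\|+\operatorname{Var}(f;[0,T])\}$.

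The energy route you settle on is also the one that produces the stated right-hand side: your BV alternative would create a term $\sqrt T\,\|\Delta u^0\|$, which a $T$-independent constant cannot absorb. As a minor point, the aggregate bound gives the factor $\sqrt{\kappa+S_c}$, not $\max\{\sqrt\kappa,S_c^{1/2}\}$.
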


\begin{proof}
Split the error through the spatially semidiscrete solution:
\[
 U-U_{h\tau}=(U-U_h)+(U_h-U_{h\tau}).
\]
Theorem~\ref{thm:continuous_semidiscrete_error}, \eqref{eq:continuous_initial_spatial_error}, and the natural $H^2$ bound \eqref{eq:continuous_natural_H2} control the first term by $O(h)$. Indeed,
\[
 \|U_t\|_{L^2(0,T;\Hmu)}
 \leq\|\nabla u^0\|+\kappa^{-1/2}\|f\|_{L^2(0,T;H)},
\]
and \eqref{eq:continuous_derived_laplacian} therefore controls $\|u\|_{L^2(0,T;H^2)}$ by the same data, with constants depending only on $C_{\rm ell}$, $\kappa$, and $\Lambda$. Proposition~\ref{prop:continuous_restricted_temporal_error} controls the second by $O(\tau)$, uniformly in $h$. For the energy term, split
\[
 u-\overline u_{h\tau}
 =(u-u_h)+(u_h-\overline u_{h\tau})
\]
and apply Proposition~\ref{prop:continuous_restricted_temporal_error} and the field energy term of Theorem~\ref{thm:continuous_semidiscrete_error}. Finally, $\|f\|_{L^2(0,T;H)}\leq\sqrt T \{\|f(0)\|+\operatorname{Var}(f;[0,T])\}$, which gives \eqref{eq:continuous_full_discretization}.
\end{proof}

\section{Conclusions}\label{sec:conclusion}

In this work, we first formulate the play characteristics as closed convex subsets of the spatial Hilbert space, thereby including the canonical pointwise model while allowing genuinely spatially structured constraints.  
We then construct a convex subgradient-flow structure by introducing a suitable weighted Hilbert metric and energy functional. The resulting formulation makes the dissipative structure explicit and provides a common variational framework for the subsequent discretization analysis.
Within this framework, we prove first-order convergence of the implicit Euler method under bounded-variation forcing and an initial-domain condition, without assuming higher-order temporal regularity of the hysteresis variables. This recovers the optimal convergence rate despite the hysteresis-induced temporal nonsmoothness in contrast with the square-root rate available for general accretive evolutions. For the spatial discretization, only the diffusive field is restricted to the finite element space, while the hysteresis variables remain in their natural Hilbert space. A gap-preserving comparison construction then yields an \(O(h)\) semidiscrete estimate without any independent spatial approximation of the hysteresis variables. Together, these results yield the fully discrete bound \(O(h+\tau)\) under exact spatial integration and constitutive evaluation.

The present analysis concerns the exact-integration formulation. An important extension is to quantify the additional errors introduced by practical implementations, including spatial quadrature and finite approximations of continuous play superpositions. These approximations introduce consistency errors and may also affect the convexity and dissipation structures exploited in the analysis. Establishing corresponding error estimates while retaining the relevant structural properties would provide a natural link between the present theory and practical discretizations.

\appendix

\section{Representative Prandtl–Ishlinskii models of play type}
\label{app:examples}

In the following, we will give representative examples that naturally satisfy Assumption~\ref{ass:continuous_model}.

\begin{example}[Continuous-density PI law \citep{wang_robust_2006,alJanaidehFundamentalsPI2023}]
\label{ex:engineering_density_pi}
Let $R_{\max}\in(0,\infty]$, and let $\rho$ be a Lebesgue measurable function satisfying
\[
 \rho:(0,R_{\max})\to[0,\infty),
 \qquad \rho\in L^1(0,R_{\max}).
\]
Denote by $\Playop_r[u]$ the scalar play output in Definition~\ref{def:continuous_pi} with $c(r)=1$ and constraint $K_r=[-r,r] \subset H =\mathbb{R}$.  Engineering models of piezoelectric and other smart-material actuators use the continuous-density PI law 
\begin{equation}
 \PIop_\rho[u](t)
 =a u(t)+\int_0^{R_{\max}}\rho(r)\Playop_r[u](t)\,dr.
 \label{eq:engineering_density_pi}
\end{equation}
Take $\Rcal=(0,R_{\max})$ and let $\mu$ be the completion of the measure $d\mu(r)=\rho(r)\,dr$ on the Lebesgue $\sigma$-algebra. Take $c(r)=1$ and the pointwise lift $K_r=K_r^{\mathrm{pt}}$ from \eqref{eq:standard_play_constraint_family}. Then $\mu(\Rcal)=\|\rho\|_{L^1}<\infty$ and $S_c=\mu(\Rcal)$. Each $K_r$ is nonempty, closed, and convex.  Its metric projection is the pointwise clipping map
\begin{equation}
  \label{eq:pointwise_clipping_map}
  [P_{K_r}z](x)=\min\{r,\max\{-r,z(x)\}\}.
\end{equation}
For $r,s\in(0,R_{\max})$ and $z,y\in H$, the nonexpansiveness of the clipping map and its Lipschitz dependence on the threshold give
\[
 \|P_{K_r}z-P_{K_s}y\|
 \leq \|z-y\|+|\Omega|^{1/2}|r-s|.
\]
Here $|\Omega|$ denotes the Lebesgue measure of $\Omega$. Thus $(r,z)\mapsto P_{K_r}z$ is continuous from $(0,R_{\max})\times H$ to $H$.  Since
\[
 \operatorname{Gr}K
 =\{(r,z)\in(0,R_{\max})\times H:P_{K_r}z=z\},
\]
the graph is the zero set of the continuous map $(r,z)\mapsto\|P_{K_r}z-z\|$ and is therefore Borel measurable, which verifies \eqref{eq:continuous_graph_measurability}. Hence \eqref{eq:engineering_density_pi} satisfies Definition~\ref{def:continuous_pi} and Assumption~\ref{ass:continuous_model}.
\end{example}

\begin{example}[Finite PI calibration \citep{gu_modeling_2014,zhang_prandtlishlinskii_2023}]
\label{ex:engineering_atomic_pi}
Let $m\in\mathbb N$, choose thresholds $r_j>0$ and weights $\alpha_j>0$ for $j=1,\ldots,m$, and set
\begin{equation}
 \PIop_m[v](t)
 =a_0v(t)+\sum_{j=1}^m\alpha_j\Playop_{r_j}[v](t),
 \qquad a_0\geq0.
 \label{eq:engineering_atomic_pi}
\end{equation}
Positive finite superpositions of this form are standard in control-oriented PI models and have been used experimentally for piezomicropositioning actuators.  Let $\delta_r$ denote the Dirac measure concentrated at $r$.  Formula \eqref{eq:engineering_atomic_pi} is Definition~\ref{def:continuous_pi} with $\Rcal=\{r_1,\ldots,r_m\}$ with its power-set $\sigma$-algebra, $\mu=\sum_{j=1}^m\alpha_j\delta_{r_j}$ and $c(r_j)=1$, $K_{r_j}=K_{r_j}^{\mathrm{pt}}$ given by \eqref{eq:standard_play_constraint_family}, and $a=a_0$. All measurability requirements are automatic on the finite parameter space, and $S_c=\sum_{j=1}^m\alpha_j<\infty$. Thus the finite engineering calibration also satisfies Assumption~\ref{ass:continuous_model}.

The standard linear play is the special case $m=1$, $a_0=0$, and $\alpha_1=1$. More generally, a one-play output with positive weight $c_0$ and a nonempty closed convex constraint $K\subset H$ is obtained directly from Definition~\ref{def:continuous_pi} by taking
\[
 \Rcal=\{r_0\},\qquad \mu=\delta_{r_0},\qquad
 c(r_0)=c_0,\qquad K_{r_0}=K,\qquad a=0.
\]
Here $S_c=c_0$ and graph measurability is automatic. Hence the original linear-play problem is the one-atom member of the present class.
\end{example}

\begin{example}[Global $L^2$-ball constraint]
\label{ex:global_l2_ball}
Let $\varrho:\Rcal\to[0,\infty)$ be measurable and set
\[
 K_r:=\{z\in H:\|z\|\leq \varrho(r)\}.
\]
This is a nonempty closed convex subset of $H=L^2(\Omega)$, and its metric projection is
\[
 P_{K_r}z=
 \begin{cases}
  z, & \|z\|\leq\varrho(r),\\[1mm]
  \displaystyle\frac{\varrho(r)}{\|z\|}z,
     & \|z\|>\varrho(r).
 \end{cases}
\]
Unlike the pointwise constraint $K_r^{\mathrm{pt}}$, this constraint is spatially nonlocal: when it is active, the projection at every point depends on the global norm $\|z\|_{L^2(\Omega)}$. Moreover,
\[
 \operatorname{Gr}K
 =\{(r,z)\in\Rcal\times H:\|z\|\leq\varrho(r)\}
 \in\mathcal A\otimes\mathcal B(H),
\]
Hence this family satisfies Assumption~\ref{ass:continuous_model}.
\end{example}

\section{Measurability of parameter-dependent metric projections}
\label{app:measurable_projection}

The following result supplies the measurable-projection argument used in the proof of Lemma~\ref{lem:continuous_pointwise_normal}.

\begin{proposition}[Measurable metric projection]
\label{prop:measurable_metric_projection}
Let $(\Rcal,\mathcal A,\mu)$ be a complete finite measure space, let $H$ be a separable Hilbert space, and let $K:\Rcal\rightrightarrows H$ have nonempty closed convex values and measurable graph
\[
 \operatorname{Gr}K\in\mathcal A\otimes\mathcal B(H).
\]
If $x:\Rcal\to H$ is strongly measurable, then
\[
 r\longmapsto P_{K_r}x(r)
\]
is strongly measurable.
\end{proposition}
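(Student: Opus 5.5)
The plan is to deduce strong measurability from measurable selection theory, writing $P_{K_r}x(r)$ as the unique minimizer of a Carathéodory integrand over a measurable multifunction. First, since $(\Rcal,\mathcal A,\mu)$ is complete and $H$ is a separable (hence Polish) space, the Aumann--von Neumann projection theorem applied to $\operatorname{Gr}K\in\mathcal A\otimes\mathcal B(H)$ shows that $K$ is a measurable multifunction: for every open $O\subset H$, the set $\{r: K_r\cap O\neq\emptyset\}$ is the projection onto $\Rcal$ of $\operatorname{Gr}K\cap(\Rcal\times O)$, and it belongs to $\mathcal A$ by completeness. By the Kuratowski--Ryll-Nardzewski and Castaing representation theorem, there is a sequence of measurable selections $(s_k)_{k\geq1}$ of $K$ with $\overline{\{s_k(r)\}_k}=K_r$ for every $r$. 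Nonemptiness of the values is used here.

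Next, I would define $d(r):=\operatorname{dist}(x(r),K_r)=\inf_k\|x(r)-s_k(r)\|$. Each $r\mapsto\|x(r)-s_k(r)\|$ is measurable, because $x$ and $s_k$ are strongly measurable, so $d$ is measurable. To identify the projection, I would use the strong convexity of the norm squared. If $v\in K_r$ satisfies $\|x(r)-v\|^2\leq d(r)^2+\varepsilon$, then the parallelogram law together with the variational characterization \eqref{eq:metric_projection_characterization} gives
\[
\|v-P_{K_r}x(r)\|^2\leq \|x(r)-v\|^2-d(r)^2\leq\varepsilon.
\]
For each $j$, define the index
\[
k_j(r):=\min\bigl\{k:\|x(r)-s_k(r)\|^2<d(r)^2+2^{-2j}\bigr\}.
\]
This index is finite by density of the selections and is a measurable $\mathbb N$-valued function. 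Then $p_j(r):=s_{k_j(r)}(r)=\sum_k \mathbf 1_{\{k_j=k\}}(r)\,s_k(r)$ is strongly measurable, being a countable measurable patching of strongly measurable functions. The displayed estimate gives $\|p_j(r)-P_{K_r}x(r)\|\leq 2^{-j}$, so $p_j\to P_{K_r}x(r)$ pointwise. A pointwise limit of strongly measurable functions is strongly measurable, which completes the argument.

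The main obstacle is the first step, passing from graph measurability to the existence of a Castaing representation. This is exactly where the completeness of $\mu$ and the separability of $H$ are needed, through the projection theorem for analytic sets. I would cite a standard result for it (for instance, Castaing--Valadier, Theorem~III.30, or Aubin--Frankowska, Theorem~8.1.4) rather than reprove it. The remaining steps are elementary.
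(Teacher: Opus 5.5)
Your proof is correct, and it takes a genuinely different route from the paper's.

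The paper applies Castaing--Valadier's Lemma~III.39 in one step. It uses the jointly $\mathcal A\otimes\mathcal B(H)$-measurable integrand $\phi(r,z)=-\|x(r)-z\|^2$ and the graph-measurable map $K$. This gives universal measurability of $\sup_{z\in K_r}\phi(r,z)$ and an $\widehat{\mathcal A}$-measurable selection of the argmax, which must equal $P_{K_r}x(r)$ because the projection is unique. Completeness of $\mu$ enters only at the end, to pass from $\widehat{\mathcal A}$ back to $\mathcal A$.

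You use completeness at the start instead: the projection theorem gives Effros measurability of $K$, and hence a Castaing representation. After that, everything you do is elementary and constructive:
\begin{itemize}
\item The distance $r\mapsto\operatorname{dist}(x(r),K_r)$ is an explicit countable infimum.
\item Each near-minimizer $p_j$ is a countable measurable patching of the selections $s_k$.
\item The inequality $\|v-P_{K_r}x(r)\|^2\leq\|x(r)-v\|^2-\operatorname{dist}(x(r),K_r)^2$ gives the quantitative rate $\|p_j(r)-P_{K_r}x(r)\|\leq 2^{-j}$.
\end{itemize}
A minor point: that inequality follows just by expanding the square and using the variational characterization of the projection. The parallelogram law is not needed.

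As for what each route buys: the paper's argument is shorter. It uses convexity only through uniqueness of the minimizer, so it would apply verbatim to any jointly measurable integrand with a unique attained optimum. Your argument relies essentially on Hilbert-space strong convexity, since near-minimizers must converge. In exchange, it isolates the single non-elementary ingredient (graph measurability implies a Castaing representation). It also gives measurability of the distance function as a by-product. Finally, it would go through without completeness whenever $K$ is already known to be Effros measurable.
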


\begin{proof}
Set
\[
 \phi(r,z):=-\|x(r)-z\|^2,
 \qquad (r,z)\in\Rcal\times H.
\]
We first verify that $\phi$ is $\mathcal A\otimes\mathcal B(H)$-measurable. Since $x$ is strongly measurable, there are $H$-valued simple functions $x_n$ such that $x_n(r)\to x(r)$ for almost every $r$. After changing $x$ and the $x_n$ on a common $\mu$-null set, which belongs to $\mathcal A$ by completeness, we may assume convergence for every $r$. Each function
\[
 (r,z)\longmapsto-\|x_n(r)-z\|^2
\]
is $\mathcal A\otimes\mathcal B(H)$-measurable, and its pointwise limit is $\phi$.

Because a separable Hilbert space is a Suslin space, Lemma~III.39 of \cite{castaingValadierConvexAnalysis1977} applies to $\phi$ and the set-valued map $K$. It shows that
\[
 m(r):=\sup_{z\in K_r}\phi(r,z)
\]
is measurable with respect to the universal completion $\widehat{\mathcal A}$ of $\mathcal A$. Moreover, whenever the supremum is attained, the argmax set-valued map
\[
 \Gamma(r):=\{z\in K_r:\phi(r,z)=m(r)\}
\]
has a $\widehat{\mathcal A}\otimes\mathcal B(H)$-measurable graph and admits a $\widehat{\mathcal A}$-measurable selection.

For every $r$, the set $K_r$ is nonempty, closed, and convex. The Hilbert projection theorem therefore implies that the supremum is attained at exactly one point and
\[
 \Gamma(r)=\{P_{K_r}x(r)\}.
\]
Consequently, the measurable selection furnished above is precisely $r\mapsto P_{K_r}x(r)$.

It remains to return from universal measurability to the original measurable space. By definition, $\widehat{\mathcal A}$ is contained in the completion of $\mathcal A$ with respect to every finite positive measure on $(\Rcal,\mathcal A)$, and hence in particular in the $\mu$-completion of $\mathcal A$. Since $(\Rcal,\mathcal A,\mu)$ is complete, this completion is $\mathcal A$ itself. Thus $r\mapsto P_{K_r}x(r)$ is $\mathcal A/\mathcal B(H)$-measurable. Finally, because $H$ is separable, Borel measurability of an $H$-valued map is equivalent to strong measurability.
\end{proof}



\section*{Acknowledgments}
The authors thank the editor and the anonymous reviewers for their careful reading and valuable suggestions.

\section*{Funding}
National Natural Science Foundation of China (12371437); Beijing Natural Science Foundation (Z240001).

\end{document}